\def\COMMENT#1{}
\def\TASK#1{}
\newtheorem{thm}{Theorem}[section]
\newtheorem{lem}[thm]{Lemma}
\newtheorem{prop}[thm]{Proposition}
\newtheorem{ques}[thm]{Question}
\newtheorem{prob}[thm]{Problem}
\newtheorem*{thm*}{Theorem}
\newtheorem*{define*}{Definition}
\newtheorem*{examp*}{Example}
\newtheorem*{lem*}{Lemma}
\newtheorem*{claim*}{Claim}
\newtheorem*{fact*}{Fact}
\newtheorem*{col*}{Corollary}
\newtheorem*{conj*}{Conjecture}
\newcommand{\Ex}{\mathbb{E}}
\renewcommand{\Pr}{\mathbb{P}}
\newcommand{\cC}{\mathcal{C}}
\newcommand{\bG}{{\bf G}}
\newcommand{\ep}{\varepsilon}
\tikzstyle{std}=[ circle, draw=black,fill=black, inner sep=0pt, minimum size=2mm]
\tikzstyle{stdsm}=[ circle, draw=black,fill=black, inner sep=0pt, minimum size=1.5mm]
\tikzstyle{set}=[ circle, draw=black,fill=none, inner sep=0pt, minimum size=15mm]
\tikzstyle{stdr}=[ circle, draw=black,fill=red, inner sep=0pt, minimum size=2mm]
\tikzstyle{stdb}=[ circle, draw=black,fill=cyan, inner sep=0pt, minimum size=2mm]
\newcommand{\xnrightarrow}[2][]{%
  \mathrel{%
    \vphantom{\xrightarrow[#1]{#2}}%
    \ooalign{\hidewidth\neg@arrow\hidewidth\cr$\m@th\xrightarrow[#1]{#2}$\cr}%
  }%
}
\newcommand{\neg@arrow}{%
  $\m@th\vcenter{\hbox{%
    \rotatebox[origin=c]{-45}{\scalebox{1.5}[1]{$\m@th\scriptscriptstyle|$}}%
  }}$
}
\DeclareTextCompositeCommand{\v}{OT1}{l}{l\nobreak\hspace{-.1em}'}
\DeclareTextCompositeCommand{\v}{OT1}{t}{t\nobreak\hspace{-.1em}'\nobreak\hspace{-.15em}}
\begin{document}

\title[]{Universality for Transversal Powers of Hamilton Cycles}
\author{Emily Heath$^{\S}$, Joseph Hyde$^{\parallel \nabla}$, Natasha Morrison$^{\ast \dagger}$, \and Shannon Ogden$^{\ast \ddagger}$}
\address{\tiny$^{\S}$Department of Mathematics and Statistics, California State Polytechnic University Pomona. Email: \href{mailto:eheath@cpp.edu}{\tt eheath@cpp.edu}.}
\address{\tiny$^{\parallel}$ Department of Mathematics, Strand Building, Strand Campus, Strand, London, WC2R 2LS. Email: \href{mailto:joseph.hyde@kcl.ac.uk}{\tt joseph.hyde@kcl.ac.uk}.}
\address{\tiny$^{\nabla}$ Supported by UK Research and Innovation Future Leaders Fellowship MR/W007320/2.}
\address{\tiny$^{\ast}$Department of Mathematics and Statistics, University of Victoria.}
\address{\tiny$^{\dagger}$Research supported by NSERC Discovery Grant RGPIN-2021-02511. Email: \href{mailto:nmorrison@uvic.ca}{\tt nmorrison@uvic.ca}.}
\address{\tiny$^{\ddagger}$Supported by Vanier Canada Graduate Scholarship. Email: \href{mailto:sogden@uvic.ca}{\tt sogden@uvic.ca}.}

\maketitle
\begin{abstract}
    
    Let $k \ge 2$ and let $\bG = \{G_1, \ldots, G_{m}\}$ be a collection of graphs on a common vertex set of cardinality $n$. We show that if each graph in $\bG$ has minimum degree at least $(1-\frac{1}{2k} + o(1))n$, then for every edge-colouring $\chi$ of the $k$th power of a Hamilton cycle $C_n^k$ with $m$ colours, there is a copy of $C_n^k$ in $\bG$ such that $e \in G_{\chi(e)}$ for every edge $e$ in $C_n^k$. This generalises a result of Bowtell, Morris, Pehova, and Staden, who provided asymptotically best possible minimum degree conditions for the Hamilton cycle.
    
\end{abstract}

\section{Introduction}
\label{Sec:Intro}

Given two graphs $G$ and $H$, a natural and well-studied question in graph theory is whether $H$ is a subgraph of $G$, that is, if $H$ can be embedded in $G$.
For example, if $G$ has order $n\ge 3$ and $H=C_n$, then this is the problem of determining whether $G$ contains a Hamilton cycle, which is known to be NP-complete~\cite{K}.
As a result, there has been an extensive search for simple sufficient conditions that guarantee the existence of a Hamilton cycle. The most famous result in this area is due to Dirac~\cite{D},
which states that any graph of order $n\ge 3$ with minimum degree at least $\frac{n}{2}$ will contain a Hamilton cycle. 

In 2020, Joos and Kim~\cite{JK} introduced a natural generalization of the general graph embedding problem to graph collections.
Let $\bG=\{G_1,\ldots,G_m\}$ be a collection of graphs on a common vertex set $V$. Note that here, and throughout this paper, the graphs in $\bG$ need not be distinct. 
Now, given a graph $H$ with $|E(H)|\le m$,
a \emph{transversal} or \emph{rainbow} copy of $H$ in $\bG$ is a copy of $H$ for which there exists an injective function $\chi:E(H)\rightarrow [m]$ such that $e\in G_{\chi(e)}$ for all $e\in E(H)$.
The \emph{transversal embedding problem} is then to determine which conditions on the graphs in $\bG$ guarantee the existence of a rainbow copy of $H$. 

Suppose that any graph $G$ satisfying condition $\cC$ has $H$ as a subgraph. 
Of particular interest is the question of whether requiring each $G_i\in \bG$ to satisfy $\cC$ is enough to guarantee the existence of a rainbow copy of $H$ in $\bG$, 
in which case we refer to $H$ as being \emph{colour-blind} with respect to $\bG$.
Note that, if all graphs in $\bG$ are identical and satisfy $\cC$,
then this is simply the original embedding problem. 
Answering a question of Aharoni (see~\cite{ADHMS}), Joos and Kim~\cite{JK} proved that the transversal generalization of Dirac's Theorem is colour-blind; that is,
any collection $\bG=\{G_1,\ldots,G_n\}$ of graphs on a common vertex set $V$ of order $n$ such that $\delta(G_i)\ge \frac{n}{2}$ for all $i\in[n]$ will contain a transversal Hamilton cycle. Transversal results have been obtained for many families of graphs, see for example \cite{AC,CHWW,CIKL,CKLS,GHMPS,JK,MMP} and the survey of Sun, Wang and Wei~\cite{SWW}.

A natural generalisation of the transversal problem is to require the existence of a copy of $H$ with a specified colour pattern.
For $\bG=\{G_1,\ldots,G_m\}$ and a fixed edge-colouring $\chi:E(H)\rightarrow [m]$, say an edge-coloured graph $H$ in $\bG$ is $\chi$\emph{-coloured} if, 
for every $e\in E(H)$, we have $e\in G_{\chi(e)}$. 
We refer to $\chi$ as a \emph{colour pattern} of $H$, and view each $G_i\in \bG$ as being in colour $i$. Note that, when $m=1$ (or equivalently when all graphs in $\bG$ are identical),
the existence of a copy of $H$ with a specified colour pattern is simply the standard graph embedding problem. 

\begin{prob}
    \thlabel{Problem}
    Given a collection $\bG=\{G_1,\ldots,G_m\}$ of graphs on a common vertex set $V$,
    and a graph $H$, what conditions on $\bG$ will guarantee that, for any edge-colouring $\chi:E(H)\rightarrow[m]$,
    there exists a $\chi$-coloured copy of $H$ in $\bG$?
\end{prob}

In comparison to the transversal embedding problem, \thref{Problem} has been much less widely studied to date, and the main focus of study has concerned minimum degree conditions. To the best of our knowledge, the first results in this area were proved by Montgomery, M\"{u}yesser and Pehova~\cite{MMP}. They found minimum degree conditions on $\bG$ which suffice to find factors, including an asymptotically best possible result for clique factors (via a result of Keevash and Mycroft~\cite{KM}). Given $\bG=\{G_1,\ldots,G_m\}$, let $\delta(\bG):=\min\{\delta(G_i):i\in [m]\}$. Bowtell, Morris, Pehova, and Staden~\cite{BMPS} 
showed that $\delta(\bG)\ge \left(\frac{1}{2}+\alpha\right)n$ is sufficient to guarantee the existence of any colour pattern of a Hamilton cycle, thus providing an asymptotically best possible answer to \thref{Problem} in this case.  This result has recently been extended by Christoph, Martinsson and Milojevi\'{c}~\cite{CMM} to the setting
of random graphs, answering a question of Pehova. 

Our main result builds upon and generalises the result of~\cite{BMPS} to powers of a Hamilton cycle. The \emph{$k$th power of a graph} $G$, 
denoted by $G^k$, is the graph obtained from $G$ by adding edges between all vertices $x$ and $y$ such that $2\le d(x,y)\le k$.

\begin{restatable}{thm}{MainTheoremGen}
\thlabel{thm:main Gen}
    Let $k\ge 2$. For every $\alpha>0$ there exists $n_0 := n_0(\alpha,k)$ such that for every $n \geq n_0$ the following holds. 
    Let $\bG = \{G_1, \ldots, G_{m}\}$ be a collection of graphs on common vertex set $V$ with $|V| = n$
    such that 
    $\delta(\bG) \geq (1-\frac{1}{2k} + \alpha)n$. Then, for every edge-colouring $\chi:E(C_n^k)\rightarrow [m]$, there exists a $\chi$-coloured $k$th power of a Hamilton cycle in $\bG$. 
\end{restatable}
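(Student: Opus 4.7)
The plan is to adapt the absorption-based framework used by Bowtell, Morris, Pehova and Staden in the Hamilton cycle case, combined with the regularity and blow-up machinery that is standard for embedding $k$th powers of cycles, while insisting on compatibility with the adversarial colour pattern $\chi$ throughout. The threshold $\delta(\bG)\ge (1-\tfrac{1}{2k}+\alpha)n$ is calibrated precisely so that for any $2k$ colours $c_1,\dots,c_{2k}\in[m]$ and any $2k$ vertices $u_1,\dots,u_{2k}\in V$, the set $\bigcap_{i=1}^{2k}N_{G_{c_i}}(u_i)$ has size at least $2k\alpha n$. This is exactly what is needed when one embeds $C_n^k$ vertex by vertex: each vertex has $2k$ already-embedded neighbours, and the $2k$ connecting edges carry colours fixed by $\chi$.

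First I would build a $\chi$-respecting absorbing structure. Identify an interval $I$ inside $V(C_n^k)$ of appropriate length to play the role of an absorbing $k$th power of a path; on $I$ the colouring $\chi|_I$ is fixed. Following Montgomery's distributive absorption, for every $v\in V$ one finds a family of absorbing gadgets, i.e.\ short $\chi$-coloured $k$th powers of paths in $\bG$ whose removal leaves a shorter $\chi$-valid configuration with the same endpoint $k$-tuples, so that $v$ can be spliced in using exactly the colours prescribed at the insertion slot. Many such gadgets exist for each $v$ by the $2k$-fold intersection property above. A bipartite template and a probabilistic argument then glue these into a single absorbing $k$th power of a path $A$ with the property that, for any small vertex set $X$ of the correct residual size, $A$ can be extended to a $\chi$-coloured $k$th power of a path on $V(A)\cup X$.

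Next I would cover almost all of the remaining vertices by a $\chi$-coloured embedding of the rest of $C_n^k$. This uses a multicolour regularity lemma applied simultaneously to $G_1,\dots,G_m$ (equivalently, to the auxiliary edge-coloured multigraph recording, for each pair of vertices, the set of colours in which it is an edge). In the reduced graph one then locates a long cyclically arranged sequence of $(k+1)$-cliques corresponding to consecutive blocks of $C_n^k$, with each cluster-pair having $\varepsilon$-regularity in the colour demanded by $\chi$ on the relevant edge of $C_n^k$; here the extra slack of the degree threshold over the Pósa--Seymour bound $(1-\tfrac{1}{k+1})n$ is what allows the tiling to be found robustly in every colour simultaneously. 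A blow-up lemma permitting prescribed colour classes on different edge slots, in the style of recent rainbow embedding results, then embeds a $\chi$-coloured $k$th power of a long path on almost all vertices outside $V(A)$.

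Finally, the absorbing segment $A$ is joined to the long almost-spanning path by a short $\chi$-coloured $k$th-power path, found greedily using the same $2k$-fold intersection bound to guarantee $\Omega(n)$ valid choices at each new vertex, and the remaining uncovered vertices are absorbed through $A$. The principal obstacle throughout is the adversarial nature of $\chi$: every absorbing gadget, every regular pair used in the blow-up, and every connecting vertex must match the colour demanded by $\chi$ at its specific edge slot, so unlike in the transversal setting we cannot choose which colour to use where. Consequently the absorbers, the reduced-graph tiling, and the connecting paths must all be designed to be \emph{uniformly} robust over all possible colour patterns, which requires tracking, at each step, the joint degree condition across the $2k$ relevant colours simultaneously rather than the minimum degree in any single host graph.
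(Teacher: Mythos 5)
Your absorbing-structure plan is essentially the same as the paper's: both use constant-size, low-degeneracy gadgets arranged via Montgomery's distributive absorption on a robustly matchable bipartite template, joined by short connectors found greedily from the $2k$-fold common-neighbourhood bound. The connecting step at the end is also essentially as in the paper.

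However, your almost-spanning cover is where the proposal departs from the paper and where there is a genuine gap. You propose applying a multicolour regularity lemma simultaneously to $G_1,\dots,G_m$ and then using a rainbow-flavoured blow-up lemma. The difficulty is that once $\chi$ is a bijection (which one may assume without loss of generality), $m$ is $\Theta(n)$, so ``simultaneous'' regularity across all colours is not available: refining a regularity partition once per colour produces an unbounded number of parts, and regularising any bounded union of the $G_c$'s loses exactly the per-colour information that a fixed colour pattern requires. Rainbow blow-up lemmas also do not apply here because they grant freedom in \emph{choosing} which colour occupies each slot, whereas $\chi$ dictates the colour of every edge in advance. This is precisely why the paper (following Bowtell, Morris, Pehova and Staden) eschews regularity for the cover step and instead runs a random greedy algorithm: it recursively builds perfect matchings between consecutive parts via a Hall-type lemma generalised to $K_{k+1}$-tilings (\thref{lem:matchingGen}), chooses a random $k$-path from the resulting factor at each step, and verifies via a R\"odl-nibble analysis that the minimum-degree conditions survive throughout. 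If you wish to salvage the regularity route, you would at least need a blow-up lemma that embeds a spanning subgraph with a prescribed (not merely rainbow) colour assignment across $\Theta(n)$ colours, which is not provided by existing tools and is not something your outline supplies. A smaller point: the slack in the theorem's degree condition, $1-\frac{1}{2k}+\alpha$, is \emph{stronger} than the P\'osa--Seymour bound $1-\frac{1}{k+1}$ for $k\ge 2$; it is needed not to make the tiling robust in every colour but to make the $k$-connector step and the matching/nibble analysis go through, and whether it is optimal is left open in the paper.
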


The result of Bowtell, Morris, Pehova, and Staden~\cite{BMPS} shows that asymptotically the degree condition required for Dirac's theorem will suffice to find all colour patterns of a Hamilton cycle. In 1962, P\'{o}sa conjectured an analogue of Dirac's Theorem for squares of Hamilton cycles, 
which was later further generalized by Seymour~\cite{S} in 1974 to any power of a Hamilton cycle. 
Koml\'{o}s, S\'{a}rk\"{o}zy, and Szemer\'{e}di~\cite{KSS} confirmed this conjecture in 1998 for large graphs: For $n$ sufficiently large, if $G$ is a graph of order $n$ with $\delta(G)\ge (1-\frac{1}{k+1})n$, then $G$ contains the $k$th power of a Hamilton cycle. Interestingly, our proof of \thref{thm:main Gen} requires $\delta(\bG)\ge (1-\frac{1}{2k}+\alpha)n$ in multiple places. We discuss this discrepancy in Section~\ref{Sec:Future}.

The rest of the paper is organized as follows:
In Section~\ref{Sec:Overview} we present a general overview of the proof of \thref{thm:main Gen}, as well as the statements of the two main lemmas (\thref{lem:absorb,lem:kpathcollection}) that will be required. 
Section~\ref{Sec:Prelim} provides the notation that will be used throughout the paper,
along with a few basic results which will be useful in later sections. 
The proof of \thref{thm:main Gen} can be found in Section~\ref{Sec:Main}, while the proofs of \thref{lem:absorb,lem:kpathcollection} can be found in Sections \ref{Sec:Absorb} and \ref{Sec:Cover}, respectively. 
We conclude in Section \ref{Sec:Future} with a discussion of possible directions for future work.

\section{Main lemmas and proof overview}
\label{Sec:Overview}

In this section, we provide a general overview of our strategy for proving \thref{thm:main Gen}, and present the two main lemmas we will use, namely \thref{lem:absorb,lem:kpathcollection}. 
Note that, for the sake of brevity, we will refer to the $k$th power of a cycle as a $k$\emph{-cycle}, and similarly refer to the $k$th power of a path as a $k$\emph{-path}.

The proof of \thref{thm:main Gen} will follow a similar absorption strategy to that of Bowtell, Morris, Pehova, and Staden in~\cite{BMPS}.
Broadly speaking, we will be constructing many $k$-paths of constant length via a simple random process,
greedily connecting these paths together to form one long $k$-path, before finally completing the Hamilton $k$-cycle by using an absorbing structure,
which had been set aside at the start, to use up the remaining vertices.

More specifically, we will begin by partitioning the vertex set $V$ of the graph collection $\bG$ into two sets $Z$ and $V\setminus Z$, so that every vertex in $V$ has sufficiently high degree into both $Z$ and $V\setminus Z$, which is made possible by \thref{prop:zexistence}.
We will view this set $Z$ as the reservoir through which we will seek to connect the various $k$-paths we construct along the way. 

Having partitioned the vertex set, we will then set aside an absorbing set $A\subseteq V\setminus Z$ on which we will build our absorbing structure. 
This absorbing structure will, when given any small subset $Z'\subseteq Z$, produce a $k$-path with a fixed colour pattern that uses exactly the vertices in $Z'\cup A$. As noted by Bowtell, Morris, Pehova, and Staden in~\cite{BMPS}, this absorbing structure must only allow for flexibility in terms of the vertices chosen, as the colour pattern produced must be fixed beforehand.
\thref{lem:absorb} guarantees the existence of such an absorbing structure within our graph collection.

\begin{restatable}[Absorbing $k$-path]{lem}{absorb}\thlabel{lem:absorb}
    Let $k\ge 2$ and $0 < \frac{1}{n} \ll \gamma \ll \beta \ll \alpha \leq 1$. 
    There exists $a \leq 500\beta k n$ such that the following holds.  
    Let $\bG = \{G_1, \ldots, G_m\}$ be a collection of graphs on a common vertex set $V$ with $|V| = n$ where $\delta(\bG) \geq (1-\frac{1}{2k} + \alpha)n$. Let $\chi$ be an edge-colouring of $P_{a+\beta n +2}^k$ with colours from $[m]$.
    Let $Z \subseteq V$ have size $(\beta + \gamma)n + 2$ and $z_1,z_2\in Z$ be distinct vertices. 
    Then there exists an `absorbing' set $A \subseteq V \setminus Z$ of size $a$ such that for every $Z' \subseteq Z \setminus \{z_1,z_2\}$ of size $\beta n$,
    there is a $\chi$-coloured $k$-path from $z_1,u_2,\ldots,u_k$ to $v_1,\ldots,v_{k-1},z_2$ in $\bG$ that covers $A \cup Z'$ and has $u_2,\ldots,u_k,v_1,\ldots,v_{k-1}\in A$.
\end{restatable}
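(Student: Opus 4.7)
The plan is to use the classical absorption method, adapted to the colour-patterned $k$-th power setting. The aim is to construct, inside $V\setminus Z$, a set $A$ of size $a=O(\beta k n)$ together with $\beta n$ pre-designated ``slots'' in the abstract $k$-path $P_{a+\beta n+2}^k$, arranged so that any $Z'\subseteq Z\setminus\{z_1,z_2\}$ of size $\beta n$ can be matched into the slots while preserving the $\chi$-colouring.

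\emph{Absorber gadgets.} Fix a position $p$ in the interior of $P_{a+\beta n+2}^k$. I would call a tuple of $O(k)$ vertices from $V\setminus Z$ a \emph{$p$-absorber for $v\in Z$} if it can fill the positions surrounding $p$ in two different ways: (i) with $v$ placed at position $p$ (the \emph{absorbing state}), or (ii) with one additional vertex from $V\setminus Z$ placed at position $p$ (the \emph{inert state}), each giving the prescribed $\chi$-coloured $k$-path segment. Using $\delta(\bG)\ge(1-\tfrac{1}{2k}+\alpha)n$, I would show that for each fixed $v$ and $p$ a positive fraction of such tuples work: at each step one selects a vertex subject to $O(k)$ colour-neighbourhood constraints, each restricting the candidate pool by at most $(\tfrac{1}{2k}-\alpha)n$, so the intersection of at most $2k$ such constraints still has size at least $\alpha n$.

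\emph{Random structure and chaining.} I would designate $\beta n$ absorber positions $p_1<\cdots<p_{\beta n}$ in the abstract path, spaced far enough apart that their gadgets are vertex-disjoint. A random set $A_0\subseteq V\setminus Z$ of size $\Theta(\beta n)$, chosen uniformly, can be shown via a Chernoff-type argument to contain $\Omega(\beta^2 n)$ $p_i$-absorbers for every $v\in Z\setminus\{z_1,z_2\}$ and every $i$. I would then chain these gadgets (in their inert states) into a single $\chi$-coloured $k$-path from $z_1$ to $z_2$, using short connecting segments of $O(k)$ fresh vertices drawn from a disjoint pool $A_1\subseteq V\setminus(Z\cup A_0)$ between consecutive gadgets; each new vertex is found by the same colour-neighbourhood-intersection argument as above. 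Setting $A:=A_0\cup A_1$ yields $|A|\le 500\beta k n$.

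\emph{Activation and obstacle.} Given any $Z'$ of size $\beta n$, I would apply Hall's theorem to the bipartite graph between $Z'$ and $\{p_1,\ldots,p_{\beta n}\}$ in which $v$ is joined to $p_i$ iff the gadget built at $p_i$ is a $p_i$-absorber for $v$; the $\Omega(\beta^2 n)$ degree guarantee makes Hall's condition easy. Switching each matched gadget from its inert state to its absorbing state then produces the required $\chi$-coloured $k$-path on $A\cup Z'\cup\{z_1,z_2\}$. The main obstacle I anticipate is that the $\alpha n$ slack in the minimum-degree hypothesis is consumed linearly in the number of colour constraints per vertex (up to $2k$ constraints whenever a new vertex is inserted into the path), so one must argue very carefully that every step -- building an absorber, extending a connecting $k$-path, and surviving the union bound over the $\Theta(\beta n)$ positions, all $v\in Z$, and all possible local patterns of the adversarial colouring $\chi$ -- still leaves a linear-sized candidate set at each decision point.
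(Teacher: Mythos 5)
Your high-level plan is right (constant-size gadgets with two states, a random ensemble, chaining via short connectors, then a matching between $Z'$ and gadget positions), and your degree bookkeeping (at most $2k$ colour-neighbourhood constraints per new vertex, giving a pool of size $\ge 2k\alpha n$) matches what the paper needs at each greedy step. However, the activation step has a genuine gap that, as far as I can tell, cannot be repaired without introducing the key structural idea the paper uses.

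The problem is the jump from ``$A_0$ contains $\Omega(\beta^2 n)$ potential $p_i$-absorbers for every $v$ and every $i$'' to ``Hall's condition is easy.'' That count is over the \emph{pool} of candidate $O(k)$-tuples inside $A_0$; but to build the inert $k$-path you must \emph{commit} to one fixed gadget per position $p_i$ before $Z'$ is revealed. After committing, the relevant bipartite graph for Hall has $Z'$ (size $\beta n$) on one side and the $\beta n$ fixed positions on the other, with $v\sim p_i$ iff the chosen gadget at $p_i$ absorbs $v$. A fixed gadget at $p_i$ absorbs $v$ iff $v$ lies in the intersection of $\le 2k$ colour-neighbourhoods of the gadget's interior vertices; the minimum-degree hypothesis only gives that this intersection has density $\ge 2k\alpha$ in $Z$, so each $v$ is adjacent to only $\ge$ roughly $\alpha\beta n$ positions. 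With both sides of size $\beta n$ and minimum degree only $\Theta(\alpha\beta n)$, Hall's condition fails for sets $S\subseteq Z'$ of size between $\alpha\beta n$ and $\beta n$, and you cannot compensate by taking more positions, since $a\le 500\beta k n$ caps the number of gadgets at $\Theta(\beta n)$, which is a factor of $k$ (not $1/\alpha$) more than $|Z'|$. This is exactly the obstruction Montgomery's robustly matchable bipartite template (Lemma~5.5 of the paper, \cite[Lemma~5.1]{BMPS}) is designed to remove: one fixes a sparse bipartite graph $B$ with parts $U\cup W$ and $X$, $|X|=3s$, $|U|=2s$, $|W|=(1+\varepsilon/\beta)s$, $\Delta(B)\le 40$, having the property that \emph{every} $W'\subseteq W$ of size $s$ extends $U$ to a set perfectly matchable into $X$; one then builds, for each $x_i\in X$, a gadget $F^k_{d_B(x_i)}$ whose flexible set is $N_B(x_i)$ (a mixture of vertices in a fixed ``buffer'' $Y\subseteq V\setminus Z$ and in $\tilde Z$), so that the perfect matching in $B[U\cup Z', X]$ dictates which of its $d_B(x_i)\ge 2$ flexible vertices each gadget actually uses. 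Because the matchability is a combinatorial property of the template $B$ and holds for \emph{every} $Z'$, no probabilistic Hall argument over the fixed gadgets is needed, and the overhead is a bounded constant rather than $1/\alpha$. Your binary (one-$v$-or-one-spare) gadget design also does not slot into this template, which needs gadgets with a flexible set of size $\ge 2$ that can absorb any one of its members while producing the identical $\chi$-coloured segment on the remaining vertices; the paper's $F^k_\ell$ gadget, obtained by cloning and re-identifying the flexible vertices inside a $\chi$-coloured $P^k_{(2k+1)\ell}$ and embedded via a degeneracy argument (Lemmas~5.3 and 5.4), is built precisely for this purpose.
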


The proof of \thref{lem:absorb} can be found in Section~\ref{Sec:Absorb}. 
Roughly speaking, our absorbing structure will be constructed as a union of absorbing gadgets, 
each of which is a constant size and has low degeneracy so it can be found in our graph collection in a robust way. 
Note that each absorbing gadget will be built on a vertex set $L\cup R$ with two special vertices $x,y\in R$ so that,
for any vertex $v\in L$, there exists a $k$-path from $x$ to $y$ with the same fixed colouring that uses exactly the vertices in $R\cup \{v\}$.
See Figure~\ref{fig:absorber} in Section~\ref{Sec:Absorb} for an illustration of such an absorbing gadget. 
Following a technique first applied by Montgomery~\cite{M}, 
we will build our absorbing structure from these absorbing gadgets according to an auxiliary template given by a robustly matchable bipartite graph. 
Finally, it is worth noting that although the proof of \thref{lem:absorb} follows the same strategy as the proof of \cite[Lemma~3.1]{BMPS}, 
our absorbing gadgets themselves differ from those used by Bowtell, Morris, Pehova, and Staden~\cite{BMPS}, 
in that our gadgets do not reduce to theirs in the case where $k=1$. 

Having set aside this absorbing structure, we will then construct an almost spanning structure composed of many $k$-paths of constant length. 
The existence of this almost spanning collection of $k$-paths is guaranteed by \thref{lem:kpathcollection}. 

\begin{restatable}[$k$-path collection]{lem}{kPathCollection}
    \thlabel{lem:kpathcollection}
        Let $k\in \mathbb{N}$ and $0 < \frac{1}{n} \ll \varepsilon, \frac{1}{r} \ll \alpha \leq 1$. Let $\bG = \{G_1, \ldots, G_{kn}\}$ be a collection of $r$-partite graphs on a common vertex partition
        $V_1, \ldots, V_r$ with $|V_i| = \lfloor\frac{n}{r}\rfloor$ for each $i$, such that $\delta(\bG[V_i, V_j]) \geq (1-\frac{1}{2k} + \alpha)\frac{n}{r}$ for all $i\in [r-1]$ and $i+1\le j\le \min\{i+k,r\}$.
        Let $s \leq (1 -\varepsilon)\frac{n}{r}$. For each $\ell \in [s]$, let $\chi_\ell$ be a colour pattern for $P_r^k$ using colours from $[kn]$.
        Then there exist vertex-disjoint $k$-paths $P_1, \ldots, P_s$ such that $P_i$ is $\chi_i$-coloured for every $i \in [s]$. 
\end{restatable}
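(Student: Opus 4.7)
The plan is to build the $s$ disjoint $k$-paths column by column: process positions $i=1,2,\ldots,r$ of $P_r^k$ in order, and at each step simultaneously extend every partial path by one vertex in $V_i$, so that at the end $u_\ell^{(1)},\ldots,u_\ell^{(r)}$ realises the path $P_\ell$ with its prescribed colour pattern $\chi_\ell$.

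Fix a column $i$. For each path $\ell \in [s]$ let
\[
N_\ell \;:=\; \bigl\{v \in V_i \,:\, \{u_\ell^{(j)},v\} \in G_{\chi_\ell(ji)} \text{ for every } j \in \{\max(1,i-k),\ldots,i-1\}\bigr\}.
\]
The minimum-degree hypothesis on each bipartite slice $\bG[V_j,V_i]$ bounds the number of non-neighbours per constraint by $(\tfrac{1}{2k}-\alpha)\tfrac{n}{r}$, so a union bound over the at most $k$ predecessors gives $|N_\ell| \ge (\tfrac{1}{2}+k\alpha)\tfrac{n}{r}$. The task at column $i$ is to choose distinct $u_\ell^{(i)} \in N_\ell$ for all $\ell \in [s]$.

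A direct appeal to Hall's theorem is unavailable: a standard double count shows that any set $S$ violating Hall's condition would force $|V_i \setminus \bigcup_{\ell \in S} N_\ell| \le (\tfrac{1}{2}-k\alpha)\tfrac{n}{r}$, which contradicts $|S| \le (1-\varepsilon)\tfrac{n}{r}$ only when $k\alpha+\varepsilon > \tfrac{1}{2}$ --- not guaranteed in our regime. So I would instead run a semi-random greedy within column $i$: pick a uniformly random order on the $s$ paths and assign to each path in turn a uniformly random vertex of $N_\ell$ that is not yet used in $V_i$. Because at most $s \le (1-\varepsilon)\tfrac{n}{r}$ vertices are ever used, the random order makes the already-used set $U \subseteq V_i$ pseudorandomly distributed, and a Doob martingale plus Azuma--Hoeffding (using $\tfrac{1}{n}\ll\varepsilon$) yields
\[
|N_\ell \setminus U| \;\ge\; |N_\ell|\Bigl(1-\tfrac{|U|}{n/r}\Bigr) - O\bigl(\sqrt{n\log n}\bigr) \;\ge\; \tfrac{\varepsilon}{3}\bigl(\tfrac{1}{2}+k\alpha\bigr)\tfrac{n}{r}
\]
throughout the column with probability $1-o(n^{-1})$; a union bound over the $rs \le n$ assignments closes the column with positive probability.

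The main obstacle I foresee is that the random choices in columns $1,\ldots,i-1$ feed into the sets $N_\ell$ at column $i$, so the columns cannot be analysed independently. I would propagate an induction hypothesis through $i$ asserting that (i) all previous columns were successfully filled and (ii) a mild pseudo-randomness invariant for the partial embedding (e.g.\ that for each colour $c$ the set of already-placed vertices in $V_j$ is not too concentrated inside any individual $c$-neighbourhood in $V_i$) is maintained; the latter is only needed so that the Doob step above concentrates, and itself follows from Azuma applied to each bipartite slice conditioned on the previous columns succeeding. If threading this invariant through all $r$ columns proves delicate, a clean fallback is to reserve in each $V_i$ at the outset a random ``repair set'' $R_i$ of size $\tfrac{\varepsilon}{2}\tfrac{n}{r}$, run the random greedy only on $V_i \setminus R_i$, and use $R_i$ via Hall's theorem (now with a massive surplus) to finish off the $o(n/r)$ paths that fail to be extended in each column.
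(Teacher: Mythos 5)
The paper's proof takes a genuinely different route from yours. Rather than advancing all $s$ paths simultaneously column by column, it builds the paths one at a time: at step $i$ it fixes the single colour pattern $\chi_i$ and constructs an entire $\chi_i$-coloured $k$-path \emph{factor} of the vertices currently remaining, via an iterated tiling-extension process through an auxiliary bipartite graph $H_{G,T}$ (\thref{lem:matchingGen}); it then selects one path of that factor uniformly at random to be $P_i$, deletes its $r$ vertices, and repeats. Building a factor is what makes Hall's theorem usable: since the $n_i$ paths of the step-$i$ factor cover the $n_i$ remaining vertices of each column exactly, the auxiliary bipartite graph is balanced with minimum degree at least $n_i/2$ on \emph{both} sides, so the perfect matching follows immediately from \thref{lem:matching}. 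The only remaining work is a nibble-style argument -- grouping steps into blocks of size $\gamma n$ and applying Chernoff -- to show that the relevant minimum degrees between the sets $V_\ell^{i}$ degrade gracefully from block to block.

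Your column-by-column plan runs into the system-of-distinct-representatives obstacle you yourself flag, and I do not think your proposed fix closes the gap. The concentration step $|N_\ell\setminus U|\gtrsim |N_\ell|\bigl(1-\tfrac{|U|}{n/r}\bigr)$ implicitly treats $U$ as a uniformly random $|U|$-subset of $V_i$, but $U$ is assembled by drawing each new vertex from some candidate set $N_{\ell'}$, not from all of $V_i$; if the $N_{\ell'}$ of the paths processed before $\ell$ are biased towards a common region of $V_i$, then $U$ inherits that bias and $N_\ell\setminus U$ can empty before its turn. What is missing is a lower bound on the degree of a fixed $v\in V_i$ in the candidate bipartite graph, i.e.\ on how many paths $\ell$ have $v\in N_\ell$. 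For $k\ge 2$ each $N_\ell$ is an intersection of up to $k$ colour-specific neighbourhoods, and because the colours $\chi_\ell(j,i)$ vary freely between paths, the forbidden sets $V_j\setminus N_{G_{\chi_\ell(j,i)}}(v,V_j)$ change with $\ell$, so this vertex-side degree has no easy bound -- in contrast to the symmetric $\ge n_i/2$ bound the paper enjoys. Your proposed invariant (``the placed vertices are not too concentrated in any colour neighbourhood'') is precisely what would be needed, but propagating it through $r$ columns while $N_\ell$ depends on earlier random choices is the entire difficulty, and the sketch does not show how to carry it through. The repair-set fallback shares the gap: the assertion that only $o(n/r)$ paths per column fail is unsubstantiated, and in a bad concentration scenario $\Theta(n/r)$ paths could get stuck simultaneously.
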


The proof of \thref{lem:kpathcollection} is given in Section~\ref{Sec:Cover},
and uses a simple random process to produce the required $k$-paths, one at a time. 
Our strategy is similar to that of Bowtell, Morris, Pehova, and Staden in \cite[Lemma~3.2]{BMPS}.
There, at each step in their algorithm, the authors recursively constructed perfect matchings between consecutive sets in their partition via a simple consequence of Hall's matching theorem, 
before selecting one of the resulting paths uniformly at random.
In that case, any choice of matchings would suffice, as they would necessarily give a path factor in $\bG$ with the required colour pattern. 
However, the situation becomes more complicated when dealing with $k$-paths,
as it is not the case that choosing any matchings between pairs of sets $V_i$ and $V_j$ where $i-j\in[k]$ will necessarily produce a $k$-path factor. 
As a result, we will instead utilize an auxiliary bipartite graph to simultaneously build matchings between each set $V_i$ and all sets $V_j$ with $i-j\in[k]$ in a way that ensures the resulting collection of matchings is a $k$-path factor.
Note that this process will produce the required $k$-path collection provided that, with high probability,
the minimum degree condition between each set in the partition is maintained after each step. 
We will prove that this property holds by analysing our algorithm using a nibble approach pioneered by R\H{o}dl~\cite{R},
which will allow us to deal with several iterations of the algorithm at once. 

Now, having constructed this almost spanning $k$-path collection, we then connect $z_2$ to the start of the first $k$-path,
and each $k$-path to the next, through the reservoir set $Z$. We then extend the resulting $k$-path to $z_1$, ensuring we use up the remaining vertices in $V\setminus A$ (those not covered by the almost spanning $k$-path collection) and all but a small portion of the vertices in $Z$. 
This is made possible by repeated application of \thref{prop:kconnector}, as there are relatively few connections to be made. 
Finally, we complete the required Hamilton $k$-cycle via the appropriately coloured $k$-path from $z_1$ to $z_2$ whose existence is guaranteed by the absorbing structure we set aside at the beginning.

\section{Preliminaries}
\label{Sec:Prelim}

In this section we introduce notation that will be used throughout the paper, along with some preliminary results. 
Given a graph $G$, a vertex $v\in V(G)$, and a set $U\subseteq V(G)$, let $N_G(v,U):=N_G(v)\cap U$ and $d_G(v,U):=|N_G(v,U)|$. 
Let $\bG=\{G_1,\ldots,G_m\}$ be a collection of not necessarily distinct graphs on a common vertex set $V$. Recall that the minimum degree of a graph collection is $\delta(\bG):=\min\{\delta(G_i):i\in [m]\}$.
For $A\subseteq V$, let $\bG[A]:=\{G_1[A],\ldots,G_m[A]\}$. Similarly, for $A,B\subseteq V$, 
let $\bG[A,B]:=\{G_1[A,B],\ldots,G_m[A,B]\}$, where $G_i[A,B]$ is the induced bipartite subgraph of $G_i$ with vertex classes $A$ and $B$ and $E(G_i[A,B]) = \{ab \in E(G_i): a \in A, b \in B\}$. 

We use standard notation for the hierarchy of constants. 
Hence $0 < \alpha \ll \beta \ll \gamma < 1$ means we can choose constants $\alpha, \beta, \gamma$ from right to left while satisfying the desired hierarchy.
More precisely, this means that there exist non-decreasing functions $f$ and $g$ such that any statements we make about these constants hold for all $\alpha \leq f(\beta)$ and $\beta \leq g(\gamma)$. 
Larger hierarchies are defined similarly.

We now present a few basic results that will be useful in future sections.
We begin with the following well-known concentration result for binomial and hypergeometric random variables.

\begin{lem}[Chernoff bounds, {\cite[Corollary 21.7]{FK}}]\thlabel{lem:chernoff}
    Let $\varepsilon > 0$, and let $X$ be a binomially or hypergeometrically distributed random variable with mean $\mu$. 
    Then \[\mathbb{P}(X\le  (1-\varepsilon)\mu)\leq e^{-\varepsilon^2\mu/3}.\]
\end{lem}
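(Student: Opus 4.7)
The plan is to apply the standard exponential-moment (Cram\'er--Chernoff) method, handling the binomial case first and then reducing the hypergeometric case to it. After disposing of the trivial range $\varepsilon \ge 1$ (where $\{X \le (1-\varepsilon)\mu\}$ is either empty or a null event since $X \ge 0$), I would fix $0 < \varepsilon < 1$. Writing $X = \sum_{i=1}^n X_i$ with $X_i$ independent Bernoulli variables having means $p_i$ summing to $\mu$, I would apply Markov's inequality to $e^{-tX}$ for a parameter $t > 0$ to be optimised:
\[
\mathbb{P}(X \le (1-\varepsilon)\mu) = \mathbb{P}\bigl(e^{-tX} \ge e^{-t(1-\varepsilon)\mu}\bigr) \le e^{(1-\varepsilon)\mu t}\,\mathbb{E}[e^{-tX}].
\]

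Using independence and the elementary inequality $1 + x \le e^x$, the moment generating function factors and is bounded by
\[
\mathbb{E}[e^{-tX}] = \prod_{i=1}^n \bigl(1 - p_i(1-e^{-t})\bigr) \le \exp\!\bigl(-\mu(1-e^{-t})\bigr).
\]
Combining this with the previous display and choosing $t = -\ln(1-\varepsilon)$ (positive on $(0,1)$, and the minimiser of the resulting exponent in $t$), the bound simplifies to $\exp(-\mu\,\varphi(\varepsilon))$, where
\[
\varphi(\varepsilon) := (1-\varepsilon)\ln(1-\varepsilon) + \varepsilon.
\]
A routine calculation using $\varphi(0) = \varphi'(0) = 0$ together with $\varphi''(\varepsilon) = 1/(1-\varepsilon) \ge 1$ on $[0,1)$ gives, by Taylor's theorem with integral remainder, $\varphi(\varepsilon) \ge \varepsilon^2/2 \ge \varepsilon^2/3$, completing the binomial case.

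For the hypergeometric case, I would invoke Hoeffding's classical reduction: if $X$ counts successes in a size-$n$ sample drawn without replacement from a population of size $N$ with $Np$ successes, and $X' \sim \mathrm{Bin}(n,p)$ is the corresponding with-replacement count, then $\mathbb{E}[f(X)] \le \mathbb{E}[f(X')]$ for every convex $f$ (the standard proof writes $X$ as an average over permutations and applies Jensen). Applying this to the convex function $f(x) = e^{-tx}$ shows the MGF estimate above remains valid in the hypergeometric setting, and the identical optimisation in $t$ yields the same tail bound. The only substantive step is the convexity-based inequality $\varphi(\varepsilon) \ge \varepsilon^2/2$; everything else is mechanical once the exponential-moment template and Hoeffding's reduction are in place.
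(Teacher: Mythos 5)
The paper does not prove this lemma at all: it is quoted verbatim as a known result, with a citation to Frieze and Karo\'{n}ski (Corollary 21.7), so there is no in-paper argument to compare against. Your proof is the standard and correct derivation: Markov's inequality applied to $e^{-tX}$, the bound $\mathbb{E}[e^{-tX}]\le \exp(-\mu(1-e^{-t}))$ via $1+x\le e^x$, optimisation at $t=-\ln(1-\varepsilon)$ to get $\exp(-\mu\varphi(\varepsilon))$ with $\varphi(\varepsilon)=(1-\varepsilon)\ln(1-\varepsilon)+\varepsilon$, the estimate $\varphi(\varepsilon)\ge\varepsilon^2/2\ge\varepsilon^2/3$ from $\varphi''\ge 1$, and Hoeffding's convex-ordering reduction to transfer the moment-generating-function bound from the with-replacement to the without-replacement setting. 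All of these steps are sound. One small inaccuracy: at $\varepsilon=1$ the event $\{X\le 0\}=\{X=0\}$ is generally \emph{not} null, so it is not disposed of by positivity alone; but the bound still holds there, either by continuity of $\exp(-\mu\varphi(\varepsilon))$ as $\varepsilon\to 1^-$ (with $\varphi(1)=1\ge 1/3$) or directly from $\mathbb{P}(X=0)=\prod_i(1-p_i)\le e^{-\mu}$, so this is a trivially repairable slip rather than a gap.
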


The following proposition will allow us to partition the vertex set of our graph collection while maintaining degree conditions into each part.
The proof is a straightforward application of \thref{lem:chernoff}, but we include it for completeness.  

\begin{prop}\thlabel{prop:zexistence}
    Let $k\in \mathbb{N}$, and $\alpha, \beta > 0$. 
    There exists $n_0=n_0(\alpha,\beta,k)$ such that for every $n \geq n_0$ the following holds. 
    Let $\bG = \{G_1, \ldots, G_{kn}\}$ be a collection graphs on common vertex set $V$ with $|V| = n$ and $\delta(\bG) \geq (1-\frac{1}{2k} + \alpha)n$. 
    Let $Z$ be a subset of $V$ of size $\beta n$ chosen uniformly at random. Then for every $i \in [kn]$ and $v \in V$, \[d_{G_i}(v, Z)\geq \left(1-\frac{1}{2k} + \frac{\alpha}{2}\right)|Z| \ \mbox{\ \ and\ \ } \ d_{G_i}(v, V\setminus Z)\geq \left(1-\frac{1}{2k} + \frac{\alpha}{2}\right)|V\setminus Z|.\]
\end{prop}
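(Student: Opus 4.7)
The plan is to fix an arbitrary pair $(i,v)$ with $i \in [kn]$ and $v \in V$, analyze $d_{G_i}(v,Z)$ as a hypergeometric random variable, apply the Chernoff bound stated in \thref{lem:chernoff}, and then take a union bound over all choices of $(i,v)$ (and over the two desired inequalities).

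First I would observe that, since $Z$ is a uniformly random subset of $V$ of size $\beta n$, the random variable $X := d_{G_i}(v,Z) = |N_{G_i}(v) \cap Z|$ is hypergeometrically distributed with mean
\[
\mu = \frac{d_{G_i}(v) \cdot |Z|}{n} \geq \left(1 - \frac{1}{2k} + \alpha\right)|Z|.
\]
Setting $c := 1 - \frac{1}{2k}$, the target lower bound $(c + \alpha/2)|Z|$ can be written as $(1-\varepsilon)\mu$ for some $\varepsilon = \varepsilon(\alpha, k) > 0$; indeed, any $\varepsilon \leq \frac{\alpha/2}{c+\alpha}$ works, which is a positive constant depending only on $\alpha$ and $k$. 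Since $\mu = \Theta(n)$ (the implicit constants depending only on $\beta, k, \alpha$), \thref{lem:chernoff} yields
\[
\Pr\!\left[d_{G_i}(v,Z) \leq \left(c + \tfrac{\alpha}{2}\right)|Z|\right] \leq \Pr[X \leq (1-\varepsilon)\mu] \leq e^{-\varepsilon^2 \mu / 3} \leq e^{-\delta n}
\]
for some $\delta = \delta(\alpha,\beta,k) > 0$.

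Next I would handle $V \setminus Z$ in an entirely analogous way: $d_{G_i}(v, V \setminus Z) = d_{G_i}(v) - d_{G_i}(v, Z)$ is also hypergeometric with mean at least $(c+\alpha)|V \setminus Z|$, so the same Chernoff calculation gives the matching exponentially small tail bound. Finally, union-bounding over the $kn$ graphs, the $n$ vertices, and the two sides of the partition produces a failure probability of at most $2kn^2 \cdot e^{-\delta n}$, which tends to $0$ as $n \to \infty$; choosing $n_0 = n_0(\alpha, \beta, k)$ large enough makes this probability strictly less than $1$, so a valid $Z$ exists.

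There is no real obstacle here — this is a textbook Chernoff-plus-union-bound argument — but the one point to be careful about is making the small constant $\varepsilon$ \emph{uniform} over all $(i,v)$ (which it is, since it depends only on $\alpha$ and $k$ and we are using the lower bound on the mean, not the exact mean), so that a single exponential tail bound dominates the $O(n^2)$ union-bound factor.
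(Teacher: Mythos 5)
Your proof is correct and follows essentially the same strategy as the paper's: model $d_{G_i}(v,Z)$ as hypergeometric, apply the lower-tail Chernoff bound from \thref{lem:chernoff} with a fixed $\varepsilon$ depending only on $\alpha$ and $k$, handle $V\setminus Z$ symmetrically (which is also uniform, hence also hypergeometric), and finish with a union bound over $(i,v)$ pairs. Your union-bound count of $2kn^2$ pairs is in fact the accurate one (the paper writes $2n$, which undercounts, though this is harmless since both factors are swamped by the exponential tail), and your cleaner exact mean $\mu = d_{G_i}(v)|Z|/n$ avoids the slightly more conservative bookkeeping of the paper's bound.
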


\begin{proof}
    Fix $i\in [kn]$ and $v\in V$, and let $X:=d_{G_i}(v,Z)$. Note that $X$ is a hypergeometric random variable,
    which contains $\beta n$ success states if $v\notin Z$ and $\beta n-1$ success states if $v\in Z$. Hence, 
    $\Ex[X]\ge d_{G_i}(v)\left(\frac{\beta n-1}{n-1}\right)\ge \left(1-\frac{1}{2k} + \alpha-\frac{1}{\beta n}\right)\beta n$.
    Therefore, letting $\mu:=\Ex[X]$ and $\varepsilon := \frac{\alpha}{4}$, we have \[
    \mathbb{P}\left(X \leq \left(1 - \frac{1}{2k} + \frac{\alpha}{2}\right)|Z|\right) \leq \mathbb{P}\left(X\le (1-\varepsilon)\left(1-\frac{1}{2k}+\alpha-\frac{1}{\beta n}\right)\beta n\right)\le \mathbb{P}\left(X\le (1-\varepsilon)\mu\right),\] where the when first inequality holds when $n$ is sufficiently large. 
    Hence, for sufficiently large $n$, Lemma~\ref{lem:chernoff} implies 
    $\mathbb{P}\left(X \leq \left(1 - \frac{1}{2k} + \frac{\alpha}{2}\right)|Z|\right) \le e^{-\varepsilon^2 \mu/3}\le e^{-\sqrt{n}}$. An analogous argument shows that $\mathbb{P}(d_{G_i}(v, V\setminus Z))\le  e^{-\sqrt{n}}$. 
    Therefore, by taking a union bound, the probability $Z$ has the desired properties is at least $1 - 2ne^{-\sqrt{n}}$, and thus such a set $Z$ exists for sufficiently large $n_0$. 
    \end{proof}

Throughout the proof of \thref{thm:main Gen}, we will frequently need to join a $\chi_1$-coloured $k$-path $P_a^k$ to a $\chi_2$-coloured $k$-path $P_b^k$ so that the $k$-path $P_{a+k+b}^k$ we obtain is appropriately coloured. 
We now show that this can be done via our reservoir whilst avoiding some `small' subset. 

Let $S_1$ and $S_2$ be disjoint $k$-paths with $|S_1|, |S_2| \le k$. 
The $k$\emph{-connector} from $S_1$ to $S_2$ is the graph obtained from the $k$-path on $k + |S_1| + |S_2|$ vertices from $S_1$ to $S_2$ by deleting all edges within $S_1$ and all edges within $S_2$. 
In what follows, we will only need $|S_1|,|S_2| \in \{1,k\}$. For ease of notation, we will use sometimes use $J^k_{a,b}$ to denote a $k$-connector where $|S_1|=a$ and $|S_2|=b$. See Figure \ref{fig:extend+connect} for an illustration.

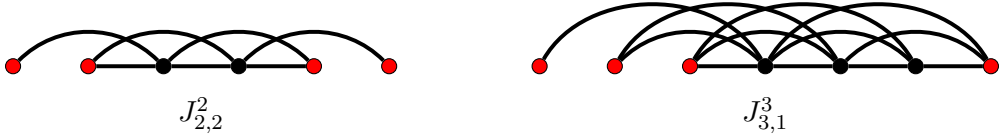
\begin{figure}[h]
\begin{center}
\begin{tikzpicture}

\begin{scope}[shift={(-4,0)}]

\node [] (J) at (-0.5,-0.65) [label= above: {}]{$J^2_{2,2}$};

\node [stdr] (1) at (-3,0) [label= above: {}]{};
\node [stdr] (2) at (-2,0) [label= above: {}]{};
\node [std] (3) at (-1,0) [label= above: {}]{};
\node [std] (4) at (0,0) [label= above: {}]{};
\node [stdr] (5) at (1,0) [label= above: {}]{};
\node [stdr] (6) at (2,0) [label= above: {}]{};

\draw [line width=1.5pt, color=black] (2)--(3);
\draw [line width=1.5pt, color=black] (3)--(4);
\draw [line width=1.5pt, color=black] (4)--(5);

\draw [line width=1.5pt, color=black] (1) to [out=45,in=135] (3);
\draw [line width=1.5pt, color=black] (2) to [out=45,in=135] (4);
\draw [line width=1.5pt, color=black] (3) to [out=45,in=135] (5);
\draw [line width=1.5pt, color=black] (4) to [out=45,in=135] (6);

\end{scope}

\begin{scope}[shift={(4,0)}]

\node [] (J) at (-1,-0.65) [label= above: {}]{$J^3_{3,1}$};

\node [stdr] (0) at (-4,0) [label= above: {}]{};
\node [stdr] (1) at (-3,0) [label= above: {}]{};
\node [stdr] (2) at (-2,0) [label= above: {}]{};
\node [std] (3) at (-1,0) [label= above: {}]{};
\node [std] (4) at (0,0) [label= above: {}]{};
\node [std] (5) at (1,0) [label= above: {}]{};
\node [stdr] (6) at (2,0) [label= above: {}]{};

\draw [line width=1.5pt, color=black] (2)--(3);
\draw [line width=1.5pt, color=black] (3)--(4);
\draw [line width=1.5pt, color=black] (4)--(5);
\draw [line width=1.5pt, color=black] (5)--(6);

\draw [line width=1.5pt, color=black] (1) to [out=45,in=135] (3);
\draw [line width=1.5pt, color=black] (2) to [out=45,in=135] (4);
\draw [line width=1.5pt, color=black] (3) to [out=45,in=135] (5);
\draw [line width=1.5pt, color=black] (4) to [out=45,in=135] (6);

\draw [line width=1.5pt, color=black] (0) to [out=60,in=120] (3);
\draw [line width=1.5pt, color=black] (1) to [out=60,in=120] (4);
\draw [line width=1.5pt, color=black] (2) to [out=60,in=120] (5);
\draw [line width=1.5pt, color=black] (3) to [out=60,in=120] (6);

\end{scope}

\end{tikzpicture}
\caption{A $2$-connector between two pairs of vertices (left) and a $3$-connector from three vertices to a single vertex (right).}
\label{fig:extend+connect}
\end{center}
\end{figure}

The following result provides us with the mechanics to join coloured $k$-paths by appropriately coloured $k$-connectors.

\begin{prop}\thlabel{prop:kconnector}
    Let $k\in \mathbb{N}$ and $\alpha > 0$. There exists $n_0:=n_0(\alpha,k)$ such that for all $n \geq n_0$ the following holds.
    Let $\bG=\{G_1, \dots, G_m\}$ be a collection of graphs on vertex set $V$ with $|V| = n$.
    Let $Z \subseteq V$ with $|Z|\ge \frac{1}{\alpha}$ such that $d_{G_i}(v, Z) \geq (1-\frac{1}{2k} +\frac{\alpha}{2})|Z|$ for all $v \in V$ and $i \in [m]$.
    Let $U \subseteq V$ satisfy $|U \cap Z| < \alpha |Z|$.
    Let $\mathbf{w}$ and $\mathbf{y}$ be $k$-paths in $\bG[V \setminus Z]$ each on at most $k$ vertices.
    Then for every colour pattern $\chi$, and every choice of there exists a $\chi$-coloured $k$-connector from $\mathbf{w}$ to $\mathbf{y}$ whose internal vertices are in $Z\setminus U$.

\end{prop}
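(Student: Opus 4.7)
My plan is to build the $k$-connector greedily by placing its $k$ internal vertices one at a time. Label them $v_1, \ldots, v_k$ in the natural left-to-right order along the target $k$-path, so that the $k$-connector is $S_1, v_1, \ldots, v_k, S_2$ with edges between any two vertices at distance at most $k$, except edges within $S_1$ or $S_2$. At step $i$ I will choose $v_i \in Z \setminus U$, distinct from $v_1, \ldots, v_{i-1}$, such that for every already-placed vertex $u$ within distance $k$ of $v_i$ in the $k$-path, the edge $uv_i$ appears in $G_{\chi(uv_i)}$.

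The main quantitative observation is that at step $i$ there are at most $2k$ such adjacency constraints on $v_i$: in the underlying $k$-path it has at most $k$ neighbours on each side, and by step $i$ every relevant neighbour among $S_1 \cup S_2 \cup \{v_1, \ldots, v_{i-1}\}$ is already placed (neighbours of the form $v_j$ with $j > i$ are not yet present and will impose constraints only when we select them). For each such constraint, the hypothesis $d_{G_c}(u, Z) \geq (1 - \frac{1}{2k} + \frac{\alpha}{2})|Z|$ rules out at most $\bigl(\tfrac{1}{2k} - \tfrac{\alpha}{2}\bigr)|Z|$ vertices of $Z$ as candidates. Taking a union bound over the $\leq 2k$ constraints and further excluding $U \cap Z$ (strictly fewer than $\alpha|Z|$ vertices) together with the previously chosen $v_j$'s (at most $k - 1$ of them), the number of valid choices for $v_i$ is strictly greater than
\[
|Z| - 2k\Bigl(\tfrac{1}{2k} - \tfrac{\alpha}{2}\Bigr)|Z| - \alpha|Z| - (k-1) = (k-1)\alpha|Z| - (k-1) = (k-1)\bigl(\alpha|Z| - 1\bigr) \geq 0,
\]
where the last inequality uses $|Z| \geq 1/\alpha$. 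Since the bound is strict, at least one valid $v_i$ exists, so the greedy process succeeds for $i = 1, \ldots, k$, and together with $\mathbf{w}$ and $\mathbf{y}$ the chosen internal vertices form the desired $\chi$-coloured $k$-connector.

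I do not anticipate any serious obstacle --- this is a textbook union bound. The only point requiring care is the constraint count: because $\mathbf{w}$ and $\mathbf{y}$ are fixed at the outset, every internal vertex immediately has both $S_1$- and $S_2$-side neighbours to respect, and no reordering of the $v_i$'s reduces the per-step constraint count below $2k$ in the worst case. The degree hypothesis is just strong enough to absorb these $2k$ exceptional sets; the boundary case $k = 1$ (where the lower bound above reduces to $0$) survives precisely because the inequality $|U \cap Z| < \alpha|Z|$ is strict, which is enough to force the valid count to be at least $1$.
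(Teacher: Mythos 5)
Your proof is correct and follows essentially the same greedy strategy as the paper's own argument: place the $k$ internal vertices one at a time, observe that each new vertex faces at most $2k$ coloured-adjacency constraints, and use the degree hypothesis plus a union bound (together with the strictness of $|U\cap Z|<\alpha|Z|$ and $|Z|\ge 1/\alpha$) to show a valid choice always remains. The arithmetic matches the paper's bound $k\alpha|Z|-\alpha|Z|-(k-1)=(k-1)(\alpha|Z|-1)\ge 0$, and your explicit handling of the $k=1$ boundary case is a nice clarification.
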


\begin{proof} We may assume $\mathbf{w}$ and $\mathbf{y}$ each have $k$-vertices, as this will imply the result for any shorter paths. 
Let $\mathbf{w}=x_1,...,x_k$ and $\mathbf{y}=x_{2k+1},...,x_{3k}$. 
We will greedily add vertices $x_{k+1},...,x_{2k}$ from $Z \setminus U$, one at a time, to build the required $k$-connector. 
Note that, for any distinct vertices $v_1,\ldots,v_{2k}\in V$ and multiset of colours $\{i_1,\ldots, i_{2k}\} \subseteq [m]$, the degree conditions on $Z$ give that 
\[\left|\bigcap_{j=1}^{2k} N_{G_{i_j}}(v_j,Z)\right|\geq \left(1-\frac{\ell}{2k}+\frac{\ell\alpha}{2}\right)|Z|\geq k\alpha|Z|,\] where the final inequality follows by observing that the minimum degree condition implies $\alpha \leq \frac{1}{k}$.
Now, at each step, the new vertex $x$ we wish to add will be a neighbour of at most $2k$ already present vertices.
Therefore, since $|U\cap Z|<\alpha|Z|$ and $\alpha|Z|\geq 1$, and since at most $k-1$ vertices of $X$ have already been chosen,
it is always possible to choose a vertex to play the role of $x$. 
\end{proof}

\section{Main result}
\label{Sec:Main}

The goal of this section is to use \thref{lem:absorb,lem:kpathcollection} to prove \thref{thm:main Gen},
which we restate below for ease of reference.

\MainTheoremGen*

\begin{proof}[Proof of \thref{thm:main Gen}:]
    Set $0 < 
    \frac{1}{n_0} \ll \varepsilon, \frac{1}{r} \ll \gamma \ll \beta \ll \alpha$. 
   It suffices to consider the case where $\chi$ is a bijection (and so $m=kn$). 
   We will construct a $\chi$-coloured Hamilton $k$-cycle $C=x_1,\ldots,x_n$.

    Let $Z$ be a subset of $V$ of size $(\beta + \gamma) n + 2$ chosen uniformly at random. By \thref{prop:zexistence},
    for all $i\in [kn]$ and $v\in V$, we have $d_{G_{i}}(v, Z) \geq \left(1-\frac{1}{2k} + \frac{\alpha}{2}\right)|Z|$ and $d_{G_{i}}(v, V\setminus Z) \geq \left(1-\frac{1}{2k} + \frac{\alpha}{2}\right)|V\setminus Z|$. 
    Let $z_1, z_2 \subseteq Z$ be distinct vertices chosen arbitrarily.
    Let $a\leq 500\beta kn$ be the constant given by \thref{lem:absorb} with constants $0<\frac{1}{n}\ll \gamma\ll \beta \ll \alpha \le 1$, and define $m:=a+\beta n+2$. 
    Let $x_1:=z_1$ and $x_{m}:=z_2$. Let $\chi_0$ be the restriction of $\chi$ to the copy of $P_{m}^k$ induced on vertex set $[m]$ of $V(C^{k}_{n})$.    
    By \thref{lem:absorb}, we can find $A \subseteq V \setminus Z$ of size $a$ such that, for every subset $Z' \subseteq Z\setminus \{z_1, z_2\}$ of size $\beta n$,
    there is a $\chi_0$-coloured $k$-path on $m$ vertices from $z_1$ to $z_2$ using exactly the vertices in $A \cup Z'\cup\{z_1,z_2\}$. 
    Moreover, by the proof of \thref{lem:absorb}, regardless of what subset $Z'\subseteq Z\setminus \{z_1, z_2\}$ is chosen, 
    the resulting $k$-path will always have the same initial $k$ vertices, say $z_1,u_2,\ldots,u_k$, as well as the same final $k$ vertices, say $y_1,\ldots,y_{k-1},z_2$, all of which are in $A\cup\{z_1,z_2\}$.

   We now wish to find an appropriately coloured $k$-path from $z_2$ to $z_1$ in $V\setminus A$ that covers $V \setminus (A \cup Z)$ and
    uses up all but $\beta n$ vertices of $Z$.
     Define $V' := V\setminus (A \cup Z)$ and $n' := |V'|$, noting that $n'=(1-\beta-\gamma)n-a-2k$. Let $\bG' := \{G_1', \ldots, G_{kn}'\}$, where $G_i' := G_i[V']$ for every $i \in [kn]$. 
    Note that $\delta(\bG') \geq (1-\frac{1}{2k} + \frac{\alpha}{2})n$ since $\beta, \gamma \ll \alpha$. 
    
    We will first cover almost all vertices of $V'$ with appropriately coloured $k$-paths of length $r$. 
    We then connect these $k$-paths together through $Z$ in a way that covers the remaining vertices of $V'$ whilst leaving precisely $\beta n$ vertices of $Z$ unused. Let $(V_0, \ldots, V_r)$ be a  partition of $V'$ into $r$ sets $V_1,\ldots,V_r$ of size $\lfloor \frac{n'}{r} \rfloor$ and one set $V_0$ of size $n' - r\lfloor \frac{n'}{r} \rfloor \leq r$ chosen uniformly at random.
    By \thref{prop:zexistence}, $\delta(\bG'[V_i, V_{j}]) \geq (1-\frac{1}{2k} + \frac{\alpha}{4})\lfloor\frac{n'}{r}\rfloor$ for all $i\not= j\in [r]$.

    Let $s := \lfloor(1-\varepsilon)\lfloor \frac{n'}{r} \rfloor\rfloor$. For each $i\in[s]$, let $\chi_i$ be the restriction of $\chi$ to the copy of $P_r^k$ induced on vertex set $\{m+i(k+r)-(r-1),\ldots,m+i(k+r)\}$ of $V(C_n^k)$.
    Applying \thref{lem:kpathcollection} with constants $0<\frac{1}{n}\ll \varepsilon,\frac{1}{r}\ll \frac{\alpha}{4}\le 1$ to the partition $V_1,\ldots,V_r$ 
    gives us vertex-disjoint $k$-paths $P_1, \ldots, P_s$ such that $P_i$ is $\chi_i$-coloured for all $i\in [s]$.

    Now, let $\chi'$ be the restriction of $\chi$ to the copy of $P_{n-m}^k$ that $C_n^k$ induces from $m-k+1$ to $n-k$.
    For each $i\in[s]$, define $\chi'_i$ to be the restriction of $\chi$ to the $k$-connector that $C_n^k$ induces from $m+(i-1)r+(i-2)k+1,...,m+(i-1)r+(i-1)k$ to $m+(i-1)r+ik+1,...,m+(i-1)r+(i+1)k$. 
    Finally, let $\chi'_{s+1}$ be the restriction of $\chi$ to the $k$-connector that $C_n^k$ induces from $n-2k+1,...,n-k$ to $1,...,k$. 
    It remains to make the following connections 
 using only vertices in $(V' \cup Z)\setminus\{z_1, z_2\}$ such that each $k$-path and $k$-connector is internally disjoint from any previous ones:
    \begin{itemize}
        \item[(i)] Connect $y_1,\ldots,y_{k-1},z_2$ to the start of $P_1$ via a $\chi'_1$-coloured $k$-connector.
        \item[(ii)] For each $i\in[s-1]$, connect the end of $P_i$ start of $P_{i+1}$ via a $\chi'_{i+1}$-coloured $k$-connector.
        \item[(iii)] Connect the end of $P_s$ to $(z_1,u_2,\ldots,u_k)$ so that the resulting $k$-path is $\chi'$-coloured and covers any remaining vertices in $V'$.  
    \end{itemize}

    First, note that the total number of connections in (i) and (ii) is at most $\frac{n'}{r} \ll \gamma n$.
Therefore, by repeatedly applying \thref{prop:kconnector}, we can make these connections greedily through $Z$.
It remains to make the connection in (iii). The number of vertices that still need to be covered in $V'$ is $c := n' - sr \ll \gamma n$. 
Let $v_1,\ldots,v_c$ be an arbitrary ordering on the remaining vertices in $V'$.
By repeatedly applying \thref{prop:kconnector}, we can greedily join the end of $P_s$ to $v_1$ through $Z$ via a $k$-connector in the required colours,
then join the end of the resulting $k$-path to $v_2$ via another $k$-connector in the required colours, and so on. 
Note that this process terminates with a $k$-path from $z_2$ to $v_c$ with the required colour pattern, which covers all of $V'$, 
has internal vertices in $V'\cup Z$, and contains $kc\ll\gamma n$ vertices from $Z\setminus\{z_1,z_2\}$. 
Let the last $k$ vertices of this path be $x_1,\ldots,x_{k-1},v_c$ where $x_i\in Z\setminus\{z_1,z_2\}$ for all $i\in[k-1]$.
    
To complete step (iii), 
we must join $x_1,\ldots,x_{k-1},v_c$ to $z_1,u_2,\ldots,u_k$ using only vertices that remain in $Z\setminus\{z_1,z_2\}$.
To do this we will greedily and recursively extend our $k$-path, one vertex at a time, 
until exactly $\beta n+k$ vertices of $Z\setminus\{z_1,z_2\}$ remain. 
Since the number of vertices in $Z\setminus\{z_1,z_2\}$ that have already been used is $ck\ll \gamma n$, and since $\gamma\ll\beta\ll\alpha$,
the minimum degree into $Z$ remains sufficient throughout this greedy process.
We then apply \thref{prop:kconnector} one last time to connect the end of the resulting $k$-path to $z_1,u_2,\ldots,u_k$ via a $\chi'_{s+1}$-coloured $k$-connector,
leaving a set $Z' \subseteq Z \setminus\{z_1, z_2\}$ of exactly $\beta n$ vertices uncovered.

Finally, to complete the desired Hamilton $k$-cycle, we take the  $\chi_0$-coloured $k$-path whose existence is guaranteed by the absorbing property of $A$ given by \thref{lem:absorb}.\end{proof}

\section{Absorption}
\label{Sec:Absorb}

Our goal in this section is to prove \thref{lem:absorb}, which we restate here for convenience.

\absorb*

As outlined in Section~\ref{Sec:Overview}, our strategy will be to construct an absorbing structure that provides a rainbow $k$-path (with a fixed colour pattern) avoiding any small given set of vertices.
In particular, this absorbing structure will always produce the same segment of our Hamilton $k$-cycle, regardless of which vertices are to be avoided. 
The absorbing structure will be composed of relatively few disjoint absorbing gadgets, each of which has a constant size and low degeneracy so it can be found in our graph collection in a robust way. 
We will then utilize a robustly matchable bipartite graph as a template to facilitate the construction of our absorbing structure from these gadgets. 

We begin by defining our absorbing gadgets. Let $k,\ell,m\in \mathbb{N}$ with $\ell\ge 2$. Let $\chi:E(P_{r}^k)\rightarrow[m]$ where $r:=(2k+1)\ell$. 
For a given set $L$ of $\ell$ vertices, an edge-coloured graph $F$ with vertex set $R\cup L$ is an $(L,\chi)$\emph{-absorbing gadget} if, for every $v \in L$, there exists a $\chi$-coloured $k$-path on vertex set $R\cup\{v\}$ whose first $k$ and final $k$ vertices lie in $R$.
We begin by providing an explicit construction of such an absorbing gadget. 
See Figure~\ref{fig:absorber} for an illustration when $k=2$ and $\ell=4$.

\vspace{3mm}

\begin{figure}[h]
\begin{center}
\begin{tikzpicture}

\begin{scope}
\definecolor{brinkpink}{rgb}{0.98, 0.38, 0.5}
\definecolor{pastelpink}{rgb}{1.0, 0.82, 0.86}
\definecolor{pansypurple}{rgb}{0.47, 0.09, 0.29}

\draw [line width=2pt, color=black] (1,1)-- (2,1);
\draw [line width=2pt, color=black] (2,1)-- (3,1);
\draw [line width=2pt, color=black] (3,1)-- (4,1);
\draw [line width=2pt, color=black] (4,1)-- (5,1);
\draw [line width=2pt, color=black] (5,1)-- (6,1);
\draw [line width=2pt, color=black] (6,1)-- (7,1);
\draw [line width=2pt, color=black] (7,1)-- (8,1);
\draw [line width=2pt, color=black] (8,1)-- (9,1);
\draw [line width=2pt, color=black] (9,1)-- (10,1);
\draw [line width=2pt, color=black] (10,1)-- (11,1);
\draw [line width=2pt, color=black] (11,1)-- (12,1);
\draw [line width=2pt, color=black] (12,1)-- (13,1);
\draw [line width=2pt, color=black] (13,1)-- (14,1);
\draw [line width=2pt, color=black] (14,1)-- (15,1);
\draw [line width=2pt, color=black] (15,1)-- (16,1);

\draw [line width=2pt, color=black] (3,1) arc [start angle=180, end angle=0, x radius=1, y radius=0.4];
\draw [line width=2pt, color=black] (4,1) arc [start angle=180, end angle=0, x radius=1, y radius=0.4];
\draw [line width=2pt, color=black] (7,1) arc [start angle=180, end angle=0, x radius=1, y radius=0.4];
\draw [line width=2pt, color=black] (8,1) arc [start angle=180, end angle=0, x radius=1, y radius=0.4];
\draw [line width=2pt, color=black] (11,1) arc [start angle=180, end angle=0, x radius=1, y radius=0.4];
\draw [line width=2pt, color=black] (12,1) arc [start angle=180, end angle=0, x radius=1, y radius=0.4];

\draw [line width=2pt, color=red] (1,1)-- (2.5,-1);
\draw [line width=2pt, color=Orange] (2,1)-- (2.5,-1);
\draw [line width=2pt, color=Dandelion] (3,1)-- (2.5,-1);
\draw [line width=2pt, color=Yellow] (4,1)-- (2.5,-1);

\draw [line width=2pt, color=YellowGreen] (5,1)-- (6.5,-1);
\draw [line width=2pt, color=OliveGreen] (6,1)-- (6.5,-1);
\draw [line width=2pt, color=PineGreen] (7,1)-- (6.5,-1);
\draw [line width=2pt, color=BlueGreen] (8,1)-- (6.5,-1);

\draw [line width=2pt, color=ProcessBlue] (9,1)-- (10.5,-1);
\draw [line width=2pt, color=NavyBlue] (10,1)-- (10.5,-1);
\draw [line width=2pt, color=MidnightBlue] (11,1)-- (10.5,-1);
\draw [line width=2pt, color=CadetBlue] (12,1)-- (10.5,-1);

\draw [line width=2pt, color=Orchid] (13,1)-- (14.5,-1);
\draw [line width=2pt, color=Thistle] (14,1)-- (14.5,-1);
\draw [line width=2pt, color=pastelpink] (15,1)-- (14.5,-1);
\draw [line width=2pt, color=brinkpink] (16,1)-- (14.5,-1);

\draw [line width=2pt, color=red] (1,1)-- (4.5,3);
\draw [line width=2pt, color=Orange] (2,1)-- (4.5,3);
\draw [line width=2pt, color=Dandelion] (3,1)-- (4.5,3);
\draw [line width=2pt, color=Yellow] (4,1)-- (4.5,3);
\draw [line width=2pt, color=YellowGreen] (5,1)-- (4.5,3);
\draw [line width=2pt, color=OliveGreen] (6,1)-- (4.5,3);
\draw [line width=2pt, color=PineGreen] (7,1)-- (4.5,3);
\draw [line width=2pt, color=BlueGreen] (8,1)-- (4.5,3);

\draw [line width=2pt, color=YellowGreen] (5,1)-- (8.5,3);
\draw [line width=2pt, color=OliveGreen] (6,1)-- (8.5,3);
\draw [line width=2pt, color=PineGreen] (7,1)-- (8.5,3);
\draw [line width=2pt, color=BlueGreen] (8,1)-- (8.5,3);
\draw [line width=2pt, color=ProcessBlue] (9,1) -- (8.5,3);
\draw [line width=2pt, color=NavyBlue] (10,1)-- (8.5,3);
\draw [line width=2pt, color=MidnightBlue] (11,1)-- (8.5,3);
\draw [line width=2pt, color=CadetBlue] (12,1)-- (8.5,3);

\draw [line width=2pt, color=ProcessBlue] (9,1) -- (12.5,3);
\draw [line width=2pt, color=NavyBlue] (10,1)-- (12.5,3);
\draw [line width=2pt, color=MidnightBlue] (11,1)-- (12.5,3);
\draw [line width=2pt, color=CadetBlue] (12,1)-- (12.5,3);
\draw [line width=2pt, color=Orchid] (13,1)-- (12.5,3);
\draw [line width=2pt, color=Thistle] (14,1)-- (12.5,3);
\draw [line width=2pt, color=pastelpink] (15,1)-- (12.5,3);
\draw [line width=2pt, color=brinkpink] (16,1)-- (12.5,3);

\begin{scriptsize}
\node [std] (b1) at (1,1)[label=below left: {$b_1$}]{};
\node [std] (b2) at (2,1)[label=below left: {$b_2$}]{};
\node [std] (b3) at (3,1)[label=below right: {$b_3$}]{};
\node [std] (b4) at (4,1)[label=below right: {$b_4$}]{};
\node [std] (b5) at (5,1)[label=below: {$b_5$}]{};
\node [std] (b6) at (6,1)[label=below left: {$b_6$}]{};
\node [std] (b7) at (7,1)[label=below right: {$b_7$}]{};
\node [std] (b8) at (8,1)[label=below right: {$b_8$}]{};
\node [std] (b9) at (9,1)[label=below: {$b_9$}]{};
\node [std] (b10) at (10,1)[label=below left: {$b_{10}$}]{};
\node [std] (b11) at (11,1)[label=below right: {$b_{11}$}]{};
\node [std] (b12) at (12,1)[label=below right: {$b_{12}$}]{};
\node [std] (b13) at (13,1)[label=below: {$b_{13}$}]{};
\node [std] (b14) at (14,1)[label=below left: {$b_{14}$}]{};
\node [std] (b15) at (15,1)[label=below right: {$b_{15}$}]{};
\node [std] (b16) at (16,1)[label=below right: {$b_{16}$}]{};

\node [std] (a1) at (2.5,-1)[label=below: {$a_1$}]{};
\node [std] (a2) at (6.5,-1)[label=below: {$a_2$}]{};
\node [std] (a3) at (10.5,-1)[label=below: {$a_3$}]{};
\node [std] (a4) at (14.5,-1)[label=below: {$a_4$}]{};

\node [std] (c1) at (4.5,3)[label=above: {$c_1$}]{};
\node [std] (c2) at (8.5,3)[label=above: {$c_2$}]{};
\node [std] (c3) at (12.5,3) [label=above: {$c_3$}]{};
\end{scriptsize}
\end{scope}

\begin{scope}[shift={(0,-4)},scale=0.5]
\definecolor{brinkpink}{rgb}{0.98, 0.38, 0.5}
\definecolor{pastelpink}{rgb}{1.0, 0.82, 0.86}
\definecolor{pansypurple}{rgb}{0.47, 0.09, 0.29}

\draw [line width=1.5pt, color=black] (1,1)-- (2,1);
\draw [line width=1.5pt, color=black] (2,1)-- (3,1);
\draw [line width=1.5pt, color=black] (3,1)-- (4,1);
\draw [line width=1.5pt, color=black] (4,1)-- (5,1);
\draw [line width=1.5pt, color=black] (5,1)-- (6,1);
\draw [line width=1.5pt, color=black] (6,1)-- (7,1);
\draw [line width=1.5pt, color=black] (7,1)-- (8,1);
\draw [line width=1.5pt, color=black] (8,1)-- (9,1);
\draw [line width=1.5pt, color=black] (9,1)-- (10,1);
\draw [line width=1.5pt, color=black] (10,1)-- (11,1);
\draw [line width=1.5pt, color=black] (11,1)-- (12,1);
\draw [line width=1.5pt, color=black] (12,1)-- (13,1);
\draw [line width=1.5pt, color=black] (13,1)-- (14,1);
\draw [line width=1.5pt, color=black] (14,1)-- (15,1);
\draw [line width=1.5pt, color=black] (15,1)-- (16,1);

\draw [line width=1.5pt, color=black] (3,1) arc [start angle=180, end angle=0, x radius=1, y radius=0.4];
\draw [line width=1.5pt, color=black] (4,1) arc [start angle=180, end angle=0, x radius=1, y radius=0.4];
\draw [line width=1.5pt, color=black] (7,1) arc [start angle=180, end angle=0, x radius=1, y radius=0.4];
\draw [line width=1.5pt, color=black] (8,1) arc [start angle=180, end angle=0, x radius=1, y radius=0.4];
\draw [line width=1.5pt, color=black] (11,1) arc [start angle=180, end angle=0, x radius=1, y radius=0.4];
\draw [line width=1.5pt, color=black] (12,1) arc [start angle=180, end angle=0, x radius=1, y radius=0.4];

\draw [line width=1.5pt, color=red] (1,1)-- (2.5,-1);
\draw [line width=1.5pt, color=Orange] (2,1)-- (2.5,-1);
\draw [line width=1.5pt, color=Dandelion] (3,1)-- (2.5,-1);
\draw [line width=1.5pt, color=Yellow] (4,1)-- (2.5,-1);

\draw [line width=1.5pt, color=Gray, dashed] (5,1)-- (6.5,-1);
\draw [line width=1.5pt, color=Gray, dashed] (6,1)-- (6.5,-1);
\draw [line width=1.5pt, color=Gray, dashed] (7,1)-- (6.5,-1);
\draw [line width=1.5pt, color=Gray, dashed] (8,1)-- (6.5,-1);

\draw [line width=1.5pt, color=Gray, dashed] (9,1)-- (10.5,-1);
\draw [line width=1.5pt, color=Gray, dashed] (10,1)-- (10.5,-1);
\draw [line width=1.5pt, color=Gray, dashed] (11,1)-- (10.5,-1);
\draw [line width=1.5pt, color=Gray, dashed] (12,1)-- (10.5,-1);

\draw [line width=1.5pt, color=Gray, dashed] (13,1)-- (14.5,-1);
\draw [line width=1.5pt, color=Gray, dashed] (14,1)-- (14.5,-1);
\draw [line width=1.5pt, color=Gray, dashed] (15,1)-- (14.5,-1);
\draw [line width=1.5pt, color=Gray, dashed] (16,1)-- (14.5,-1);

\draw [line width=1.5pt, color=Gray, dashed] (1,1)-- (4.5,3);
\draw [line width=1.5pt, color=Gray, dashed] (2,1)-- (4.5,3);
\draw [line width=1.5pt, color=Gray, dashed] (3,1)-- (4.5,3);
\draw [line width=1.5pt, color=Gray, dashed] (4,1)-- (4.5,3);
\draw [line width=1.5pt, color=YellowGreen] (5,1)-- (4.5,3);
\draw [line width=1.5pt, color=OliveGreen] (6,1)-- (4.5,3);
\draw [line width=1.5pt, color=PineGreen] (7,1)-- (4.5,3);
\draw [line width=1.5pt, color=BlueGreen] (8,1)-- (4.5,3);

\draw [line width=1.5pt, color=Gray, dashed] (5,1)-- (8.5,3);
\draw [line width=1.5pt, color=Gray, dashed] (6,1)-- (8.5,3);
\draw [line width=1.5pt, color=Gray, dashed] (7,1)-- (8.5,3);
\draw [line width=1.5pt, color=Gray, dashed] (8,1)-- (8.5,3);
\draw [line width=1.5pt, color=ProcessBlue] (9,1) -- (8.5,3);
\draw [line width=1.5pt, color=NavyBlue] (10,1)-- (8.5,3);
\draw [line width=1.5pt, color=MidnightBlue] (11,1)-- (8.5,3);
\draw [line width=1.5pt, color=CadetBlue] (12,1)-- (8.5,3);

\draw [line width=1.5pt, color=Gray, dashed] (9,1) -- (12.5,3);
\draw [line width=1.5pt, color=Gray, dashed] (10,1)-- (12.5,3);
\draw [line width=1.5pt, color=Gray, dashed] (11,1)-- (12.5,3);
\draw [line width=1.5pt, color=Gray, dashed] (12,1)-- (12.5,3);
\draw [line width=1.5pt, color=Orchid] (13,1)-- (12.5,3);
\draw [line width=1.5pt, color=Thistle] (14,1)-- (12.5,3);
\draw [line width=1.5pt, color=pastelpink] (15,1)-- (12.5,3);
\draw [line width=1.5pt, color=brinkpink] (16,1)-- (12.5,3);

\begin{scriptsize}
\node [stdsm] (b1) at (1,1){};
\node [stdsm] (b2) at (2,1){};
\node [stdsm] (b3) at (3,1){};
\node [stdsm] (b4) at (4,1){};
\node [stdsm] (b5) at (5,1){};
\node [stdsm] (b6) at (6,1){};
\node [stdsm] (b7) at (7,1){};
\node [stdsm] (b8) at (8,1){};
\node [stdsm] (b9) at (9,1){};
\node [stdsm] (b10) at (10,1){};
\node [stdsm] (b11) at (11,1){};
\node [stdsm] (b12) at (12,1){};
\node [stdsm] (b13) at (13,1){};
\node [stdsm] (b14) at (14,1){};
\node [stdsm] (b15) at (15,1){};
\node [stdsm] (b16) at (16,1){};

\node [stdsm] (a1) at (2.5,-1){};
\node [stdsm] (a2) at (6.5,-1){};
\node [stdsm] (a3) at (10.5,-1){};
\node [stdsm] (a4) at (14.5,-1){};

\node [stdsm] (c1) at (4.5,3){};
\node [stdsm] (c2) at (8.5,3){};
\node [stdsm] (c3) at (12.5,3){};
\end{scriptsize}
\end{scope}

\begin{scope}[shift={(8,-4)},scale=0.5]
\definecolor{brinkpink}{rgb}{0.98, 0.38, 0.5}
\definecolor{pastelpink}{rgb}{1.0, 0.82, 0.86}
\definecolor{pansypurple}{rgb}{0.47, 0.09, 0.29}

\draw [line width=1.5pt, color=black] (1,1)-- (2,1);
\draw [line width=1.5pt, color=black] (2,1)-- (3,1);
\draw [line width=1.5pt, color=black] (3,1)-- (4,1);
\draw [line width=1.5pt, color=black] (4,1)-- (5,1);
\draw [line width=1.5pt, color=black] (5,1)-- (6,1);
\draw [line width=1.5pt, color=black] (6,1)-- (7,1);
\draw [line width=1.5pt, color=black] (7,1)-- (8,1);
\draw [line width=1.5pt, color=black] (8,1)-- (9,1);
\draw [line width=1.5pt, color=black] (9,1)-- (10,1);
\draw [line width=1.5pt, color=black] (10,1)-- (11,1);
\draw [line width=1.5pt, color=black] (11,1)-- (12,1);
\draw [line width=1.5pt, color=black] (12,1)-- (13,1);
\draw [line width=1.5pt, color=black] (13,1)-- (14,1);
\draw [line width=1.5pt, color=black] (14,1)-- (15,1);
\draw [line width=1.5pt, color=black] (15,1)-- (16,1);

\draw [line width=1.5pt, color=black] (3,1) arc [start angle=180, end angle=0, x radius=1, y radius=0.4];
\draw [line width=1.5pt, color=black] (4,1) arc [start angle=180, end angle=0, x radius=1, y radius=0.4];
\draw [line width=1.5pt, color=black] (7,1) arc [start angle=180, end angle=0, x radius=1, y radius=0.4];
\draw [line width=1.5pt, color=black] (8,1) arc [start angle=180, end angle=0, x radius=1, y radius=0.4];
\draw [line width=1.5pt, color=black] (11,1) arc [start angle=180, end angle=0, x radius=1, y radius=0.4];
\draw [line width=1.5pt, color=black] (12,1) arc [start angle=180, end angle=0, x radius=1, y radius=0.4];

\draw [line width=1.5pt, color=Gray, dashed] (1,1)-- (2.5,-1);
\draw [line width=1.5pt, color=Gray, dashed] (2,1)-- (2.5,-1);
\draw [line width=1.5pt, color=Gray, dashed] (3,1)-- (2.5,-1);
\draw [line width=1.5pt, color=Gray, dashed] (4,1)-- (2.5,-1);

\draw [line width=1.5pt, color=YellowGreen] (5,1)-- (6.5,-1);
\draw [line width=1.5pt, color=OliveGreen] (6,1)-- (6.5,-1);
\draw [line width=1.5pt, color=PineGreen] (7,1)-- (6.5,-1);
\draw [line width=1.5pt, color=BlueGreen] (8,1)-- (6.5,-1);

\draw [line width=1.5pt, color=Gray, dashed] (9,1)-- (10.5,-1);
\draw [line width=1.5pt, color=Gray, dashed] (10,1)-- (10.5,-1);
\draw [line width=1.5pt, color=Gray, dashed] (11,1)-- (10.5,-1);
\draw [line width=1.5pt, color=Gray, dashed] (12,1)-- (10.5,-1);

\draw [line width=1.5pt, color=Gray, dashed] (13,1)-- (14.5,-1);
\draw [line width=1.5pt, color=Gray, dashed] (14,1)-- (14.5,-1);
\draw [line width=1.5pt, color=Gray, dashed] (15,1)-- (14.5,-1);
\draw [line width=1.5pt, color=Gray, dashed] (16,1)-- (14.5,-1);

\draw [line width=1.5pt, color=red] (1,1)-- (4.5,3);
\draw [line width=1.5pt, color=Orange] (2,1)-- (4.5,3);
\draw [line width=1.5pt, color=Dandelion] (3,1)-- (4.5,3);
\draw [line width=1.5pt, color=Yellow] (4,1)-- (4.5,3);
\draw [line width=1.5pt, color=Gray, dashed] (5,1)-- (4.5,3);
\draw [line width=1.5pt, color=Gray, dashed] (6,1)-- (4.5,3);
\draw [line width=1.5pt, color=Gray, dashed] (7,1)-- (4.5,3);
\draw [line width=1.5pt, color=Gray, dashed] (8,1)-- (4.5,3);

\draw [line width=1.5pt, color=Gray, dashed] (5,1)-- (8.5,3);
\draw [line width=1.5pt, color=Gray, dashed] (6,1)-- (8.5,3);
\draw [line width=1.5pt, color=Gray, dashed] (7,1)-- (8.5,3);
\draw [line width=1.5pt, color=Gray, dashed] (8,1)-- (8.5,3);
\draw [line width=1.5pt, color=ProcessBlue] (9,1) -- (8.5,3);
\draw [line width=1.5pt, color=NavyBlue] (10,1)-- (8.5,3);
\draw [line width=1.5pt, color=MidnightBlue] (11,1)-- (8.5,3);
\draw [line width=1.5pt, color=CadetBlue] (12,1)-- (8.5,3);

\draw [line width=1.5pt, color=Gray, dashed] (9,1) -- (12.5,3);
\draw [line width=1.5pt, color=Gray, dashed] (10,1)-- (12.5,3);
\draw [line width=1.5pt, color=Gray, dashed] (11,1)-- (12.5,3);
\draw [line width=1.5pt, color=Gray, dashed] (12,1)-- (12.5,3);
\draw [line width=1.5pt, color=Orchid] (13,1)-- (12.5,3);
\draw [line width=1.5pt, color=Thistle] (14,1)-- (12.5,3);
\draw [line width=1.5pt, color=pastelpink] (15,1)-- (12.5,3);
\draw [line width=1.5pt, color=brinkpink] (16,1)-- (12.5,3);

\begin{scriptsize}
\node [stdsm] (b1) at (1,1){};
\node [stdsm] (b2) at (2,1){};
\node [stdsm] (b3) at (3,1){};
\node [stdsm] (b4) at (4,1){};
\node [stdsm] (b5) at (5,1){};
\node [stdsm] (b6) at (6,1){};
\node [stdsm] (b7) at (7,1){};
\node [stdsm] (b8) at (8,1){};
\node [stdsm] (b9) at (9,1){};
\node [stdsm] (b10) at (10,1){};
\node [stdsm] (b11) at (11,1){};
\node [stdsm] (b12) at (12,1){};
\node [stdsm] (b13) at (13,1){};
\node [stdsm] (b14) at (14,1){};
\node [stdsm] (b15) at (15,1){};
\node [stdsm] (b16) at (16,1){};

\node [stdsm] (a1) at (2.5,-1){};
\node [stdsm] (a2) at (6.5,-1){};
\node [stdsm] (a3) at (10.5,-1){};
\node [stdsm] (a4) at (14.5,-1){};

\node [stdsm] (c1) at (4.5,3){};
\node [stdsm] (c2) at (8.5,3){};
\node [stdsm] (c3) at (12.5,3){};

\end{scriptsize}
\end{scope}

\end{tikzpicture}
\caption{The absorbing gadget $F^2_4$ (top), where the black edges all have distinct colours not used elsewhere in the absorber, 
and how it absorbs $a_1$ (bottom left) and $a_2$ (bottom right). The vertices $a_3$ and $a_4$ are absorbed symmetrically.}\label{fig:absorber}
\end{center}
\end{figure}
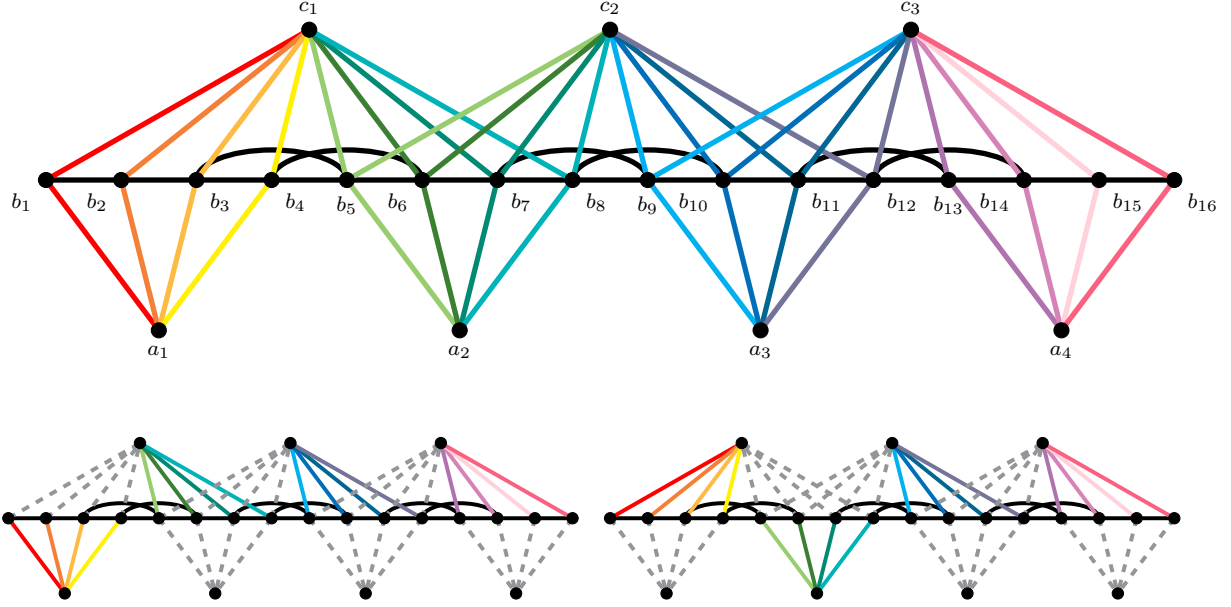

Given $\chi:E(P_{r}^k)\rightarrow[m]$, and disjoint sets of vertices $A = \{a_1, \ldots, a_{\ell}\}$, $B = \{b_1, \ldots, b_{2k\ell}\}$, and
$C = \{c_1, \ldots, c_{\ell-1}\}$, define the edge-coloured graph $F^k_{\ell}$ = $F^k_{\ell}(\chi,A,B,C)$ as follows: 
Let $S$ be the sequence of vertices in $A\cup B$ where the vertices in $B$ appear in order, and for each $i$, the vertex $a_i$ appears directly after $b_{(2i-1)k}$.
Let $F^k_{\ell}[A \cup B]$ be a $\chi$-coloured copy of $P_r^k$ induced on $S$. Now, for each $i\in[\ell]$,
duplicate the vertex $a_i$ twice (call the clones $a'_i$ and $a''_i$), but delete $a'_1$ and $a''_{\ell}$.
Finally, identify the vertices $a''_{i}$ and $a'_{i+1}$ and call this vertex $c_i$ for all $i\in [\ell-1]$. 
Note that $N(c_i) = N(a_{i}) \cup N(a_{i+1})$ and each edge $c_ib$ has the same colour as the unique edge from $\{a_{i},a_{i+1}\}$ to $b$. To see that $F^k_{\ell}$ is an $(A,\chi)$-absorbing gadget with $R = B \cup C$, note that, 
for any $a_i\in A$, the sequence of vertices $S_i$ obtained from $S$ by replacing $a_j$ with $c_j$ for all $1\le j<i$, 
and $a_j$ with $c_{j-1}$ for all $i< j\le \ell$, yields the required $\chi$-coloured $k$-path in $F^k_\ell[B\cup C\cup\{a_i\}]$.

The following lemma will enable us to simultaneously find many disjoint absorbing gadgets in $\bG$,
which will allow us to absorb any subset of our reservoir of a particular size. 

\begin{lem}[Robust existence of absorbing gadgets]\thlabel{lem:robustabsorbers}
    Let $0 < \frac{1}{n} \ll \frac{1}{\ell},\frac{1}{k}, \alpha \leq 1$ where $k,\ell\in \mathbb{N}$ with $\ell\ge 2$. Let $r:=(2k+1)\ell$.
    Let $\bG = \{G_1, \ldots, G_{m}\}$ be a collection of graphs on vertex set $V$ with $|V| = n$ where $\delta(\bG) \geq \left(1-\frac{1}{k+2} + \alpha\right) n$. 
    Let $\chi:E(P_{r}^k)\rightarrow [m]$. 
    Let $U \subseteq V$ where $|U| \leq \frac{\alpha n}{4}$. 
    Then, for every $L \subseteq V$ of size $\ell$, there is an $(L,\chi)$-absorbing gadget in $\bG$ that avoids vertices in $U\setminus L$.
\end{lem}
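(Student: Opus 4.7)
The plan is to construct the absorbing gadget via the explicit recipe $F^k_\ell(\chi, L, B, C)$ given just before the lemma: set $A := L$ as preplaced, and embed the remaining vertices $B\cup C$ in $V\setminus(L\cup U)$ one-by-one in a carefully chosen order. Two structural features of $F^k_\ell$ will drive the argument: each $c_i$ has neighbourhood exactly $N_{P_r^k}(a_i)\cup N_{P_r^k}(a_{i+1})\subseteq B$, with no edges to $A$ or to any other $c_j$; and the sets $N_{P_r^k}(a_j)$ for $j\in[\ell]$ are pairwise disjoint blocks of $B$ of size $2k$, so each $b_j$ lies in a unique $N_{P_r^k}(a_{i^*})$ and is adjacent to at most the two $c$-vertices $c_{i^*-1}$ and $c_{i^*}$.

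First I would process $c_1,\ldots,c_{\ell-1}$ and then $b_1, b_2, \ldots, b_{2k\ell}$ in index order. Under this ordering each $c_i$ has back-degree $0$ (its neighbourhood lies entirely among the later $b$'s), and each $b_j$ has back-degree at most $k+2$: at most $k$ from the $k$-th power of a path structure on $A\cup B$ (only the $k$ vertices preceding $b_j$ in the defining sequence $S$ can contribute) plus at most $2$ from $C$ by the second structural fact. Next, for the greedy step, when placing a new vertex $v$ whose back-neighbours $v_1, \ldots, v_t$ (for some $t\le k+2$) have already been embedded as $w_1, \ldots, w_t$ with prescribed edge-colours $\chi(v v_j)$, an admissible image of $v$ is any vertex in
\[
\bigcap_{j=1}^{t} N_{G_{\chi(v v_j)}}(w_j)\;\setminus\;\bigl(U\cup L\cup\{\text{already used vertices}\}\bigr).
\]
The hypothesis $\delta(\bG)\ge (1-\tfrac{1}{k+2}+\alpha)n$ bounds each missing neighbourhood $V\setminus N_{G_i}(w_j)$ by $(\tfrac{1}{k+2}-\alpha)n$, so a union bound gives that the intersection above has size at least $(k+2)\alpha n$. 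Since $|U|\le \tfrac{\alpha n}{4}$ and $|L|+|B\cup C|=O(k\ell)$, for $n$ sufficiently large this intersection minus the forbidden set is still non-empty, so a valid image always exists and the greedy process iterates through the ordering successfully.

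The hardest part will be justifying the $k+2$ degeneracy bound, which relies entirely on the two structural facts above --- the independence of $C$ together with its non-adjacency to $A$, and the partition of $B$ into the blocks $N_{P_r^k}(a_j)$. It is worth noting that this bound $k+2$ matches the minimum-degree hypothesis of the lemma exactly, which is why the weaker condition $(1-\tfrac{1}{k+2}+\alpha)n$ suffices for this step even though the overarching \thref{lem:absorb} is stated with the stronger hypothesis $(1-\tfrac{1}{2k}+\alpha)n$.
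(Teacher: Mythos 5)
Your overall strategy — greedily embed $F^k_\ell$ vertex by vertex according to a low-degeneracy ordering, using the degree hypothesis to guarantee a valid image at each step — is exactly the paper's approach (the paper just delegates the greedy step to a cited black-box lemma). The crux of the argument, however, is the degeneracy claim, and the ordering you propose does not achieve back-degree $k+2$.

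Concretely, take the ordering $A, c_1, \dots, c_{\ell-1}, b_1, \dots, b_{2k\ell}$ and consider $b_{3k}$ (assume $\ell \ge 3$, so $c_2$ exists). Here $i^* = 2$, so $b_{3k}$ is adjacent to $a_2$, to $c_1$, and to $c_2$, all of which precede it. Moreover, in the defining sequence $S$, the vertex $a_2$ lies \emph{after} $b_{3k}$ (since $b_{3k}$ is in the first half of its block), so the $k$ positions immediately preceding $b_{3k}$ in $S$ are all $b$'s, namely $b_{2k}, \dots, b_{3k-1}$, and they all precede $b_{3k}$ in your ordering. This gives back-degree $1 + 2 + k = k+3$. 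Your assertion that the $A\cup B$ contribution is at most $k$ ``since only the $k$ vertices preceding $b_j$ in $S$ can contribute'' is where the error lies: $a_{i^*}$ can appear \emph{after} $b_j$ in $S$ yet is always a back-neighbour because $A$ is the initial segment. With degeneracy $k+3$ you would need $\delta(\bG) \ge (1-\frac{1}{k+3}+\alpha)n$, which the hypothesis $(1-\frac{1}{k+2}+\alpha)n$ does not provide; indeed even the stronger condition $(1-\frac{1}{2k}+\alpha)n$ used in Lemma~\ref{lem:absorb} fails to cover $k+3$ when $k=2$.

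The fix is to interleave each $c_i$ into the $b$'s at the position $a_i$ occupied in $S$, as the paper does: after $A$, list $b_1,\dots,b_k,c_1,b_{k+1},\dots,b_{3k},c_2,b_{3k+1},\dots$ and so on. Then whenever $b_j$ is in the first half of its block (so $a_{i^*}$ is a ``surprise'' back-neighbour from $A$), the vertex $c_{i^*}$ comes \emph{after} $b_j$ and is not counted; and whenever $b_j$ is in the second half (so $c_{i^*}$ precedes it), $a_{i^*}$ occupies one of the $k$ preceding positions in $S$, cutting the back-$B$-count to $k-1$. Either way the back-degree is $k+2$.
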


\thref{lem:robustabsorbers} is a direct consequence of the following result from \cite{BMPS}. 
Say that a graph $G$ has \emph{degeneracy} $p$ if there exists an ordering $v_1,\ldots,v_n$ of the vertices of $G$ such that $d_G(v_i,S_i)\le p$ for all $i\in [n]$, where $S_i=\{v_1,\ldots,v_{i-1}\}$.
In other words, $G$ has degeneracy $p$ if there is an ordering of the vertices such that each vertex has at most $p$ neighbours that come before it in the ordering.
Note that, when we wish to specify the first $i$ vertices in this ordering, we will refer to $G$ as having degeneracy $p$ with initial segment $S_{i+1}$.
Roughly speaking, the following result from \cite{BMPS} guarantees the robust existence of any relatively small graph $J$ with low degeneracy in a graph collection that satisfies the minimum degree requirements. 
As the absorbing gadgets $F_\ell^k$ have sufficiently low degeneracy when $k\ge 2$, we can use this result to prove \thref{lem:robustabsorbers}.

\begin{lem}[{\cite[Lemma~5.4]{BMPS}}]\thlabel{lem:degen}
    Let $0 < \frac{1}{n} \ll \alpha, \frac{1}{p} < 1$ and let $\bG = \{G_1, \ldots, G_m\}$ be a collection of graphs on vertex set $V$ with $|V| = n$. 
    Let $Z \subseteq V(G)$ such that $d_{G_i}(v, Z) \geq \left(\frac{p-1}{p} + \alpha\right)|Z|$ for all $i \in [m]$ and $v \in V$.
    Let $J$ be a graph of order at most $\frac{\alpha}{4}|Z|$, and let $\chi$ be an edge-colouring of $J$ using colours from $[m]$.
    Let $I \subseteq V(J)$ be an independent set in $J$, and let $f: I \to V$ be an injection.
    Suppose $J$ has degeneracy $p$ with initial segment $I$. 
    Let $U \subseteq V\setminus f(I)$ such that $|U \cap Z| \leq \frac{\alpha}{4} |Z|$. 
    Then there is a $\chi$-coloured copy of $J$ avoiding $U$ where $x$ is mapped to $f(x)$ for each $x \in I$, 
    and every vertex in $V(J)\setminus I$ is mapped into $Z$.
\end{lem}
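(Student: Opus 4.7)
The plan is a greedy vertex-by-vertex embedding of $J$ into $V$, processing vertices in a degeneracy-$p$ order that begins with $I$. First I would fix an ordering $v_1,\ldots,v_N$ of $V(J)$ (where $N=|V(J)|$) witnessing degeneracy $p$ with initial segment $I$, and define $\varphi\colon V(J)\to V$ by setting $\varphi(v_j):=f(v_j)$ for every $v_j\in I$. No colour-adjacency conditions need to be verified at this stage because $I$ is independent in $J$, so there are no edges of $J$ internal to $I$. I would then extend $\varphi$ to $v_{|I|+1},v_{|I|+2},\ldots$ one vertex at a time, always choosing images inside $Z$.

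At step $i>|I|$, assume $\varphi(v_1),\ldots,\varphi(v_{i-1})$ are already defined and let $v_{j_1},\ldots,v_{j_s}$ (with $s\le p$) be the earlier neighbours of $v_i$ in $J$. I want to select $\varphi(v_i)\in Z$ that avoids $U$, is distinct from the previously chosen images, and is joined to each $\varphi(v_{j_t})$ by an edge of $G_{\chi(v_iv_{j_t})}$. The hypothesis on $Z$ states that for every $w\in V$ and every colour $c$, the set of vertices in $Z$ which fail to be a neighbour of $w$ in $G_c$ has size at most $(\tfrac{1}{p}-\alpha)|Z|$. Taking a union bound over the $s\le p$ adjacency constraints (one per earlier neighbour), the set of vertices in $Z$ meeting all of them has size at least
\[|Z|-s\left(\tfrac{1}{p}-\alpha\right)|Z|\;\ge\; p\alpha|Z|.\]

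From this set I would then discard the at most $|V(J)|\le\tfrac{\alpha}{4}|Z|$ previously used images and the at most $|U\cap Z|\le\tfrac{\alpha}{4}|Z|$ forbidden vertices, leaving at least $p\alpha|Z|-\tfrac{\alpha}{2}|Z|\ge\tfrac{\alpha}{2}|Z|>0$ valid candidates (using only $p\ge 1$). Any such choice extends $\varphi$ while preserving the invariants, so the greedy process runs to completion and yields a $\chi$-coloured copy of $J$ with $\varphi|_I=f$, with $\varphi(V(J)\setminus I)\subseteq Z$, and avoiding $U$.

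The one point that requires care is the degree arithmetic in the displayed bound. Because the hypothesis on $Z$ is stated uniformly over \emph{all} $w\in V$, it is immaterial that some of the already-embedded neighbours $\varphi(v_{j_t})$ lie in $V\setminus Z$ (namely the images of vertices of $I$ under $f$). The three budgets—the $p$ colour-adjacency constraints, the previously used images, and $U\cap Z$—are tailored by the hypotheses so that the degeneracy surplus $p\alpha|Z|$ strictly dominates the two $\tfrac{\alpha}{4}|Z|$ losses with an $\tfrac{\alpha}{2}|Z|$ cushion to spare. I do not foresee any obstacle beyond this bookkeeping; in particular no nibble or probabilistic argument is needed, since the greedy step is guaranteed to succeed at every stage.
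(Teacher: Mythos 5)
Your greedy degeneracy-embedding argument is correct: the union bound over the at most $p$ earlier neighbours leaves at least $p\alpha|Z|$ candidates in $Z$, which comfortably survives deleting the at most $\tfrac{\alpha}{4}|Z|$ previously used images and the at most $\tfrac{\alpha}{4}|Z|$ vertices of $U\cap Z$, and you correctly note that the degree hypothesis holds for all $w\in V$ so the images $f(I)\subseteq V\setminus Z$ cause no trouble. The paper does not reprove this lemma (it is imported verbatim from [BMPS, Lemma 5.4]), but your proof is essentially the standard one given there.
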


\begin{proof}[Proof of \thref{lem:robustabsorbers}]
     Consider the ordering of $V(F_\ell^k)$ given by $A,s_1,\ldots,s_{\ell-1},b_{2k(\ell - 1)+1},\ldots,b_{2k\ell}$, where $s_i=b_{(2i-2)k+1},\ldots,b_{(2i-1)k},c_i,b_{(2i-1)k+1},\ldots,b_{2ki}$ for all $i\in [\ell-1]$.
     Note that each vertex has at most $k+2$ previous neighbours, and that the vertices in $A$ appear first in the ordering.
     Hence $F_\ell^k$ has degeneracy $k+2$ with initial segment $A$. 
     Therefore, letting $L=A$ and applying \thref{lem:degen} with $Z=V$ and $p=k+2$, 
     it follows that there exists an $(L,\chi)$-absorbing gadget of the form $F_\ell^k$ in $\bG$ avoiding vertices in $U\setminus L$.  
\end{proof}

    Note that \thref{lem:robustabsorbers} does not hold when $k=1$. 
    The reason for this distinction is that $2k<k+2$ when $k=1$, which means our absorbing gadgets do not have the required degeneracy in this case. 
    It is also worth noting that \thref{lem:robustabsorbers} only requires $\delta(\bG) \geq \left(1-\frac{1}{k+2} + \alpha\right) n$.
    However, in order to join these absorbing gadgets together to construct our absorbing structure in the proof of \thref{lem:absorb}, 
    we will still require $\delta(\bG) \geq \left(1-\frac{1}{2k} + \alpha\right) n$.

The last tool we need before proving \thref{lem:absorb} is the following result from \cite{BMPS},
which provides a modified version of the robustly matchable bipartite graph given by Nenadov and Pehova in~\cite{NP}.

\begin{lem}\thlabel{lem:montgomerytemplate}
    \cite[Lemma~5.1]{BMPS} For every $\ep > 0$ and $s \in \mathbb{N}$ sufficiently large, there exists a bipartite graph $B$ with partite sets $U \cup W$ and $X$, where 
    $|U| = 2s$, $|W| = s + \ep s$, $|X| = 3s$, and $2 \leq \delta(B) \leq \Delta(B) \leq 40$, such that for every subset $W' \subseteq W$ with $|W'| = s$, there exists a perfect matching in $B[U \cup W',X]$.
\end{lem}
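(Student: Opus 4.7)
The plan is to construct $B$ probabilistically via a bi-regular random bipartite graph and verify the robust matching property using Hall's theorem combined with expansion estimates. Fix a constant $d = 20$ (so $2d \leq 40$). Form $B$ by independently assigning to each $u \in U$ and to each $w \in W$ a uniformly random $d$-subset of $X$ as its neighbor set, so all degrees in $U \cup W$ equal $d$. The degree of each $x \in X$ is then binomially distributed with mean $d(|U|+|W|)/|X| \leq 2d$, so by Chernoff and a union bound, with high probability $\Delta(B) \leq 2d \leq 40$ and $\delta(B) \geq 2$ (the latter ensured either by Chernoff for $d$ large or by a minor deterministic patch adding pendant edges at isolated vertices).

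The main task is to show that whp, for every $W' \subseteq W$ with $|W'|=s$, the bipartite graph $B[U \cup W', X]$ has a perfect matching. Since $|U \cup W'| = |X| = 3s$, by Hall's theorem it suffices to check that $|N_B(S)| \geq |S|$ for every $S \subseteq U \cup W'$. I would verify this by a case analysis on $|S|$: for small $|S| \leq \sigma s$ (with $\sigma$ a small constant depending on $d$), a first-moment union bound over all such $S$, exploiting the randomness of neighbor choices, shows $|N_B(S)| \geq 2|S|$ whp; for large $|S| \geq (3-\sigma)s$, a dual argument bounds the number of $x \in X$ with $N_B(x) \subseteq (U \cup W') \setminus S$ using the bounded degrees on the $X$-side; in the intermediate range, an expansion estimate shows that any violation $|N_B(S)| < |S|$ yields a bipartite pair $(S, X \setminus N_B(S))$ with no edges and combined size $\geq 3s$, which is exponentially improbable under the random construction. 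A final union bound over the $\binom{|W|}{s} \leq 4^s$ choices of $W'$ is absorbed by the exponential concentration, provided $d$ is chosen sufficiently large relative to $\varepsilon$.

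The main obstacle is the intermediate regime, together with the coupling between the three case bounds and the degree constraint $\Delta(B) \leq 40$: a smaller $d$ gives tighter degree control but weaker concentration, while a larger $d$ is needed both for expansion in the intermediate range and to absorb the union bound over $W'$. Making a single choice of $d$ work simultaneously --- so that expansion is robust enough to hold for every $W'$, the dual count controls the large range, and the bounded-degree requirement is met --- is the technical heart of the argument and is precisely the point where one must follow the Nenadov--Pehova construction rather than a naïve random bipartite graph. The parameter $\varepsilon$ enters only through the per-vertex expansion slack, so it can be taken arbitrarily small once $d$ is fixed.
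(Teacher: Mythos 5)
There is a genuine gap, and it sits exactly at the two degree bounds you treat as routine. In your model each $x \in X$ has degree distributed as $\mathrm{Bin}\bigl((3+\ep)s,\, d/(3s)\bigr)$, i.e.\ approximately Poisson with constant mean $\approx d = 20$. The probability that a fixed $x$ has degree exceeding $40$ is a positive constant (of order $e^{-8}$), so over the $3s$ vertices of $X$ the expected number of violations is $\Theta(s)$ and with high probability $\Delta(B) \gg 40$; indeed the maximum $X$-side degree grows like $\log s/\log\log s$. So the claim that ``by Chernoff and a union bound, whp $\Delta(B) \le 2d$'' is false, and no choice of constant $d$ repairs it. For the same reason, whp $\Theta(s)$ vertices of $X$ (with a tiny constant) have degree $0$ or $1$, so $\delta(B) \ge 2$ also fails; worse, the property you actually need in the large-$|S|$ regime is that every small $T \subseteq X$ expands into $U \cup W'$ for \emph{every} admissible $W'$, which essentially forces every $x \in X$ to have at least two neighbours in $U$ itself (a vertex of $X$ whose whole neighbourhood lies in $W \setminus W'$ is isolated in $B[U \cup W', X]$, and two vertices of $X$ sharing a single $U$-neighbour already violate Hall). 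A ``minor deterministic patch adding pendant edges'' does not deliver this, and arranging it while keeping all degrees bounded is precisely the content of the construction rather than a footnote to it; your closing paragraph effectively concedes the point by deferring the technical heart to the Nenadov--Pehova construction, which is the very thing to be proved.

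The standard fix changes the construction: make the graph (near-)biregular on \emph{both} sides, e.g.\ as a union of boundedly many random perfect matchings between suitably padded copies of the two sides, or via a configuration model with prescribed bounded degrees on $X$ as well as on $U \cup W$. Then $\Delta(B) \le 40$ and $\delta(B) \ge 2$ hold by construction, and your three-regime Hall analysis (including the union bound over the at most $4^{s}$ choices of $W'$) can be carried through, at the cost of handling the mild dependencies such models introduce. Note finally that the paper itself does not prove this lemma: it is imported verbatim from \cite{BMPS} (Lemma~5.1 there), resting on the Nenadov--Pehova/Montgomery template, so there is no in-paper argument to compare against; judged on its own terms, your proposal as written does not establish the statement.
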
 

We are now ready to prove \thref{lem:absorb}.

\begin{proof}[Proof of \thref{lem:absorb}]
    Let $k\ge 2$ and $0 < \frac{1}{n} \ll \gamma \ll \beta \ll \alpha \leq 1$ such that $s := \beta n$ is an integer.
    Let $B$ be the auxiliary bipartite graph with partite sets $U\cup W$ and $X=\{x_1,...,x_{3s}\}$ that is given by \thref{lem:montgomerytemplate} when $\ep=\frac{\gamma}{\beta}$, and let $b:=|E(B)|$. 
    Define $a := (2k+1)b+(3s+1)k-s$, noting that, since $\Delta(B)\le 40$ and $|X|=3s$, 
    we have $a\le 120(2k+1)s+(3s+1)k-s\le 500 \beta kn$ as required.
    Fix $Z \subseteq V$ of size $(\beta+\gamma)n+2 = s + \varepsilon s + 2$, and let $Y \subseteq V \setminus Z$ have size $2s$. 
    Let $z_1, z_2 \in Z$, and define $\tilde{Z}:=Z\setminus\{z_1,z_2\}$. Identify $Y$ with the set $U$ in the auxiliary bipartite graph $B$, 
    and similarly identify $\tilde{Z}$ with the set $W$ in $B$. 
    We will use $B$ as a template with which to build the absorbing gadgets that will comprise our absorbing structure.

    Partition $\chi$ into disjoint colour patterns $\chi_1,\ldots,\chi_{3s}$ and $\chi'_0,\ldots,\chi'_{3s}$ so that $\chi'_0,\chi_1,\chi'_1,\ldots,\chi_{3s},\chi'_{3s}$ is $\chi$, 
    where $\chi_i$ is a colour pattern for a $k$-path of order $(2k+1)d_B(x_i)+1$ for each $i\in[3s]$, 
    $\chi'_i$ is a colour pattern for $J^k_{k,k}$ for each $i\in[3s-1]$, and $\chi'_0$ and $\chi'_{3s}$ are colour patterns for $J^k_{1,k}$ and $J^k_{k,1}$, respectively. 
    Our goal is to build a $(N_B(x_i),\chi_i)$-absorbing gadget $F_{d_B(x_i)}^k$ for each $i\in[3s]$,
    and then join these gadgets together (and to $z_1$ and $z_2$) via appropriately coloured $k$-connectors to construct our absorbing structure. 
    Note that $d_B(x_i)\ge 2$ for all $i\in[3s]$ since $\delta(B)\ge 2$.

    We construct these absorbing gadgets recursively. 
    Begin by setting $U_0 := Y \cup Z$ and $A_0 := Y$. At step $i\in [3s]$, we apply Lemma~\ref{lem:robustabsorbers} to find a $(N_B(x_i),\chi_i)$-absorbing gadget $F_{d_B(x_i)}^k$ that avoids $U_{i-1}$,
    and then update $U_{i} := U_{i-1} \cup V(F_i)$ and $A_i := (A_{i-1} \cup V(F_i))\setminus Z$.
    Since step $i$ will use exactly $(2k+1)d_B(x_i)-1\le 40(2k+1)$ vertices outside of $Y\cup Z$,
    it follows that 
    \[|U_{i-1}|\le |U_{3s-1}|\le 2s+ 120s(2k+1)=2\beta n+ 120\beta n(2k+1)\le \frac{\alpha n}{4},\]
    and thus we can apply \thref{lem:robustabsorbers} at each step. Note that each absorbing gadget $F_{d_B(x_i)}^k$ has order $(2k+1)d_B(x_i)-1$.  
    For the sake of brevity, for each $i \in [3s]$, let $\mathbf{w}_i$ and $\mathbf{y}_i$ denote the $k$-paths induced by the first $k$ and final $k$ vertices of $F_{\ell_i}^k$, respectively.

Having constructed these absorbing gadgets, it remains to join them together (and to $z_1$ and $z_2$) to form our absorbing structure by iteratively applying \thref{prop:kconnector},
where at each step we use new vertices from $Z$ to create the desired connection. 
The end vertex $z_1$ is joined to $\mathbf{w}_1$ via a $\chi'_0$-coloured $k$-connector $H_0$. 
Similarly, $z_{2}$ is joined to $\mathbf{y}_{3s}$ via a $\chi'_{3s}$-coloured $k$-connector $H_{3s}$. Finally, each internal connection from $\mathbf{y}_i$ to $\mathbf{w}_{i+1}$ is made via a $\chi'_i$-coloured $k$-connector $H_i$.
At each step, $k$ new vertices are used in the connection, so at most $a \le \frac{\alpha}{2}n$ vertices must be avoided at a given time. 
This ensures the remaining vertices in $Z$ maintain the degree conditions required to apply \thref{prop:kconnector}. 
Define $A$ to be the set constructed by adding the vertices of $H_0,\ldots,H_{3s}$ to $A_{3s}$ and note that $|A| = a$.

 It remains to show that $A$ has the required absorbing property. 
 Suppose $Z'$ is a subset of $\tilde{Z}$ of size $\beta n$. 
 Let $M = \{x_1v_1, \ldots, x_{3s}v_{3s}\}$ be the perfect matching in $B$ covering $X\cup Y \cup Z'$ whose existence is guaranteed by \thref{lem:montgomerytemplate}.
 For each $i\in [3s]$, since $F_{d_B(x_i)}^k$ is an $(N_B(x_i),\chi_i)$-absorbing gadget, 
 there is a $\chi_i$-coloured $k$-path $P_i$ from $\mathbf{w}_i$ to $\mathbf{y}_i$ that covers $(V(F_i)\setminus N_B(x_i)) \cup \{v_i\}$.
 Joining these $k$-paths together with the connections constructed above yields a $\chi$-coloured $k$-path $H_0,P_1,H_1,\ldots,P_{3s},H_{3s}$ from $z_1$ to $z_2$ covering $A \cup Z$, as required.\end{proof}

\section{Almost spanning $k$-path collection}
\label{Sec:Cover}

The goal of this section is to prove \thref{lem:kpathcollection}, which we restate here for convenience.

\kPathCollection*

As discussed in Section~\ref{Sec:Overview}, our proof of \thref{lem:kpathcollection} follows a similar strategy to that of Lemma~3.2 in~\cite{BMPS}, 
which uses a random greedy algorithm to generate an almost spanning path collection by recursively constructing perfect matchings via the following simple consequence of Hall's matching theorem.

\begin{lem}\thlabel{lem:matching}
    If $G = (A,B)$ is bipartite with $|A| = |B| = n$ and $\delta(G) \geq \frac{n}{2}$, then $G$ has a perfect matching.
\end{lem}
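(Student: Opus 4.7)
The plan is to deduce this lemma directly from Hall's marriage theorem, so the task reduces to verifying Hall's condition: for every $S \subseteq A$, $|N(S)| \geq |S|$. I would split into two cases based on the size of $S$.

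For the first case, suppose $|S| \leq n/2$. Here I would simply pick any vertex $v \in S$ (note $S$ is nonempty since the inequality is trivial for $S=\emptyset$), and observe that $N(S) \supseteq N(v)$, so $|N(S)| \geq d_G(v) \geq n/2 \geq |S|$. This handles the small-$S$ case with no work.

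For the second case, suppose $|S| > n/2$. Here I would argue that $N(S) = B$. Indeed, pick any $b \in B$; then $d_G(b) \geq n/2$, so $|N_G(b)| + |S| > n = |A|$, which forces $N_G(b) \cap S \neq \emptyset$ by inclusion-exclusion in $A$. Thus every $b \in B$ has a neighbour in $S$, so $N(S) = B$ and $|N(S)| = n \geq |S|$.

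Having verified Hall's condition in both cases, Hall's marriage theorem yields a matching saturating $A$, and since $|A| = |B| = n$, this matching is perfect. The main (and only) obstacle here is really just the clean case split on $|S|$ relative to $n/2$; everything else is immediate from the degree hypothesis. I would state Hall's theorem as a black box rather than proving it.
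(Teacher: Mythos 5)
Your proof is correct and follows exactly the route the paper intends: the paper states this lemma without proof as "a simple consequence of Hall's matching theorem," and your case split on $|S|$ relative to $n/2$ is the standard verification of Hall's condition under the minimum degree hypothesis.
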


However, in the case of an almost spanning $k$-path collection, our algorithm will require the following generalization of \thref{lem:matching}.
For ease of notation, given a bipartite graph $G$ with partite sets $A$ and $B$ where $|A|=kn$ and $|B|=n$,
and a perfect $K_k$-tiling $T=\{F_1,...,F_n\}$ of $G[A]$, define $H_{G,T}$ to be the auxiliary bipartite graph with partite sets $X$ and $Y$, where $V(X) = T$ and $V(Y) = B$, 
such that $F_i\in X$ is adjacent to $v\in Y$ if and only if $uv\in E(G)$ for all $u\in V(F_i)$.

\begin{lem}
\thlabel{lem:matchingGen}
Let $k\ge 1$. Let $G$ be a graph of order $(k+1)n$ and $(A,B)$ a partition of $V(G)$ with $|A|=k n$ and $|B|=n$ such that $d_G(v,A) \geq \left(1-\frac{1}{2k}\right)kn$ for all $v\in B$ and $d_G(u,B) \geq \left(1-\frac{1}{2k}\right)n$ for all $u\in A$. 
Then any perfect $K_k$-tiling of $G[A]$ can be extended to a perfect $K_{k+1}$-tiling of $G$.
\end{lem}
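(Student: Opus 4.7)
The plan is to reduce the extension problem to finding a perfect matching in the auxiliary bipartite graph $H_{G,T}$ defined just before the lemma statement, and then to apply Lemma 5.1. Recall that $H_{G,T}$ has parts $X = T = \{F_1,\ldots,F_n\}$ and $Y = B$, both of size $n$, with $F_i$ joined to $v \in B$ precisely when $v$ is adjacent in $G$ to every vertex of $F_i$. A perfect matching in $H_{G,T}$ pairs each $F_i$ with a distinct $v_i \in B$ such that $F_i \cup \{v_i\}$ induces a $K_{k+1}$ in $G$, and these $K_{k+1}$'s partition $V(G)$, giving the required perfect $K_{k+1}$-tiling.

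First I would verify that $\delta(H_{G,T}) \geq n/2$, so that Lemma 5.1 is applicable. I would split this into two degree bounds, each a simple counting of non-neighbours, made efficient by the facts that $|F_i| = k$ and that each $F_i$ contains exactly $k$ of the $kn$ vertices of $A$.

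For a fixed $F_i = \{u_1,\ldots,u_k\} \in X$, a vertex $v \in B$ fails to be adjacent to $F_i$ in $H_{G,T}$ only if $v$ is non-adjacent in $G$ to some $u_j$. Each $u_j$ has at most $n - (1-\tfrac{1}{2k})n = \tfrac{n}{2k}$ non-neighbours in $B$, so by the union bound the number of bad $v$ is at most $k \cdot \tfrac{n}{2k} = \tfrac{n}{2}$, giving $d_{H_{G,T}}(F_i) \geq n/2$. Conversely, for $v \in Y = B$, the number of non-neighbours of $v$ in $A$ is at most $kn - (1-\tfrac{1}{2k})kn = \tfrac{n}{2}$. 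Since $T$ partitions $A$ into blocks of size $k$, each $F_i$ which is non-adjacent to $v$ in $H_{G,T}$ contains at least one non-neighbour of $v$, and distinct bad $F_i$'s use disjoint non-neighbours; hence the number of bad $F_i$ is at most $\tfrac{n}{2}$, giving $d_{H_{G,T}}(v) \geq n/2$.

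With both partite sets of $H_{G,T}$ of size $n$ and $\delta(H_{G,T}) \geq n/2$, Lemma 5.1 supplies a perfect matching, which completes the proof as described. No step looks like a serious obstacle: the argument is essentially Hall's theorem together with the two symmetric union-bound estimates that exploit the precise threshold $1 - \tfrac{1}{2k}$. The only subtle point is recognising that on the $B$-side we should count bad blocks by charging them to disjoint non-neighbours in $A$ rather than applying a naive union bound, since a direct union bound over the $k$ vertices in each block would give an unusable factor.
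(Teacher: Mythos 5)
Your proof is correct and follows essentially the same route as the paper: build the auxiliary bipartite graph $H_{G,T}$, verify $\delta(H_{G,T}) \geq n/2$ on both sides, and invoke the Hall-type matching lemma. The only cosmetic difference is on the $B$-side bound, where you charge each bad block to a distinct non-neighbour of $v$ in $A$, while the paper performs the equivalent algebraic accounting $d_H(v) \geq d_G(v,A) - (k-1)n$; these are the same estimate.
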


\begin{proof}
Let $T=\{F_1,\ldots,F_n\}$ be a perfect $K_k$-tiling of $G[A]$.
Consider the auxiliary bipartite graph $H_{G,T}$ as defined above, noting that $|X|=|Y|=n$.
Now, for any $v\in Y$, since $d_G(v,A) \geq \left(1-\frac{1}{2k}\right)kn$, we have $d_H(v)\ge \left(1-\frac{1}{2k}\right)kn-(k-1)n = \frac{n}{2}$.
In addition, for any $F_i\in T$, since $N_G(u,B) \geq \left(1-\frac{1}{2k}\right)n$ for all $u\in F_i$, we have $d_H(F_i)\ge n-\frac{kn}{2k}= \frac{n}{2}$.
Hence, by \thref{lem:matching}, $H$ has a perfect matching, which, by definition of $E(H)$, 
corresponds to a perfect $K_{k+1}$-tiling of $G$ that extends $T$. 
\end{proof}

We are now ready to prove \thref{lem:kpathcollection}.

\begin{proof}[Proof of \thref{lem:kpathcollection}] 
 We will use a random greedy algorithm (see Algorithm~\ref{A1} on page~\pageref{A1}) to generate the required vertex-disjoint $k$-paths $P_1,\ldots,P_s$, each of order $r$.
 Note that Algorithm~\ref{A1} produces the required collection of $k$-paths provided it does not abort at any step $i\in[s]$. 
 Indeed, at each step, the existence of a perfect matching in the given auxiliary bipartite graph is guaranteed by \thref{lem:matchingGen}, assuming we did not abort.
 Hence it suffices to prove that, with high probability, the algorithm does not abort at any step $i\in[s]$. 
To this end, we will utilize a nibble approach to analyse the algorithm, grouping the steps together into `blocks' of size $\gamma n$, and then proving that each collection behaves well.

\begin{algorithm}[h]
    \caption{$k$-path builder}
        Initialise by fixing  $n_1:=\lfloor n/r\rfloor$ and $V^{1}_j := V_j$ for all $j \in [r]$\;

        \For{$i=1,2,\ldots,s$}{
            Initialize by fixing $T_1:=V_1^i$\;
            \For{$\ell=1,2,\ldots,k-1$}{

                \eIf{$\delta(\bG[V_j^i, V_{\ell+1}^i]) < \frac{(2\ell-1)n_i}{2\ell}$ for some $j\in[\ell]$} 
                    {Abort and return an error\;}
                {Let $G^{\ell+1}$ be the union of $G_{\chi_i(1,\ell+1)}[V_{1}^i, V_{\ell+1}^i],..., G_{\chi_i(\ell,\ell+1)}[V_{\ell}^i, V_{\ell+1}^i]$. 
                Let $\mathcal{M}_{\ell+1}$ be a uniformly random perfect matching in the auxiliary bipartite graph $H_{G^{\ell+1},T_\ell}$. 
                Let $M_{(1,\ell+1)}^i,\ldots,M_{(\ell,\ell+1)}^i$ be the perfect matchings induced by $\mathcal{M}_{\ell+1}$
                in $G_{\chi_i(1,\ell+1)}[V_{1}^i, V_{\ell+1}^i],..., G_{\chi_i(\ell,\ell+1)}[V_{\ell}^i, V_{\ell+1}^i]$, respectively. 
                Let $T_{\ell+1}:=\bigcup_{a=1}^\ell\left(\bigcup_{j=1}^{a} M_{(j,a+1)}^i\right)$\;
                }
            }
            \For{$\ell=k+1,k+2,\ldots,r$}{
                \eIf{$\delta(\bG[V_{\ell-j}^i, V_{\ell}^i]) < \frac{(2k-1)n_i}{2k}$ for some $j\in[k]$}
                    {Abort and return an error\;}
                {Let $G^\ell$ be the union of $G_{\chi_i(\ell-k,\ell)}[V_{\ell-k}^i, V_{\ell}^i],\ldots,G_{\chi_i(\ell-1,\ell)}[V_{\ell-1}^i, V_{\ell}^i]$. Let $\mathcal{M}_{\ell}$ be a uniformly random perfect matching in the auxiliary bipartite graph $H_{G^\ell,T_\ell}$. 
                Let $M_{(\ell-k,\ell)}^i,\ldots,M_{(\ell-1,\ell)}^i$ be the perfect matchings induced by $\mathcal{M}_{\ell}$ in $G_{\chi_i(\ell-k,\ell)}[V_{\ell-k}^i, V_{\ell}^i],\ldots,G_{\chi_i(\ell-1,\ell)}[V_{\ell-1}^i, V_{\ell}^i]$, respectively. 
                Let $T_{\ell+1}:=\bigcup_{a=\ell-k}^{\ell-1}\left(\bigcup_{j=\ell-k}^{a} M_{(j,a+1)}^i\right)$.\;
                }
                }
                Consider the union $\bigcup_{\ell=2}^{r}\left(\bigcup_{j=1}^{\ell-1} M_{(j,\ell)}^i\right)$ of these perfect matchings, which consists of $n_i$ vertex-disjoint $\chi_i$-coloured $k$-paths of order $r$\;
                Choose one of these $k$-paths uniformly at random and label it $P_i$\;
                Let $V_j^{i+1} := V_j^i\setminus V(P_i)$ for all $j \in [r]$ and let $n_{i+1} := n_i - 1$\;
            }
            \label{A1}
\end{algorithm}

Let $\gamma := \frac{\varepsilon \alpha}{3r}$ and $t := \left\lceil \frac{s}{\gamma n} \right\rceil$.
For $q \in[t]$, let $i_{q} := (q - 1)\gamma n + 1$.
Finally, define $\delta := 1 - \left(\frac{2 + \alpha}{2 + 2\alpha}\right)^{1/t}$, 
noting that, for any $q\in[t]$, we have \begin{equation}\label{eq:alpha} (1 - \delta)^q\left(1-\frac{1}{2k} + \alpha\right)\ge (1 - \delta)^t\left(1-\frac{1}{2k} + \alpha\right) > 1-\frac{1}{2k} + \frac{\alpha}{2}.\end{equation}
For $q\in[t]$, let $A_q$ be the bad event that $\delta(\bG[V_{\ell}^{i_{q}}, V_{j}^{i_{q}}]) < (1 - \delta)^{q}\left(1-\frac{1}{2k} + \alpha\right)n_{i_{q}}$ for some $\ell\in[r-1]$ and $\ell+1\le j\le \min\{\ell+k,r\}$.
Define $T$ to be the minimum $q\in[t]$ such that $A_q$ holds, where we let $T=t+1$ if $A_q$ does not hold for any $q\in[t]$.
That is, $T$ is the first time the bad event occurs at the start of a block. 

We begin by proving that if the bad event does not hold for $q$, then the algorithm will not abort during the $q$-th block.
More formally, for any $q\in[t]$, if $T>q$, then the algorithm does not abort in steps $i_q,\ldots,\min\{i_{q+1}-1,s\}$.
Indeed, for any $i_q\le i\le \min\{i_{q+1}-1,s\}$, we have
$$\delta(\bG[V_{\ell}^i,V_j^i]) \ge \delta(\bG[V_{\ell}^{i_q},V_j^{i_q}])-(i-i_q) > \left(1-\frac{1}{2k} + \frac{\alpha}{2}\right)n_{i_{q}}-\gamma n>\left(1-\frac{1}{2k}\right)n_{i_{q}}\ge \left(1-\frac{1}{2k}\right)n_{i},$$
for all $\ell\in[r-1]$ and $\ell+1\le j\le \min\{\ell+k,r\}$. 
Here the first inequality follows since every step in the algorithm decreases the minimum degree between any two given sets in the partition by at most $1$,
the second inequality follows from (\ref{eq:alpha}) and the fact that $i_q\le i< i_{q+1}$, and the third follows since $n_{i_q} > \frac{2\gamma n}{\alpha}$.
Hence, for any $q\in[t]$, if $T>q$, then the algorithm does not abort in steps $i_q,\ldots,\min\{i_{q+1}-1,s\}$. 
In particular, if $T>t$, then the algorithm never aborts,
thus producing the desired collection of $k$-paths. Therefore, it suffices to show that, with high probability, $T>t$. 

First, note that $T>1$ by hypopthesis.
Now, fix $q\in[t-1]$. We will find an upper bound for $\Pr(T=q+1|T>q)$. 
To this end, for each $j\in[r]$, we define $Y_j^q:=\left(\bigcup_{a=i_q}^{i_{q+1}-1} V(P_{a})\right) \cap V_j$, 
which is the set of vertices in $V_j$ that are covered by the paths $P_a$ for $i_q\le a\le i_{q+1}-1$.
Given $T>q$, we have that $Y_j^q$ is a uniformly random subset of $V_j^{i_q}$ of size $\gamma n$, for all $j\in[r]$.
Note however that $Y_j^q$ and $Y_{j'}^q$ are dependent for $j\ne j'$. 

Now, fix $j\in[r]$. For each colour\footnote{For each vertex $u\in V_j^{i_q}$ and distance $g\in\{-k,\ldots,-1,1,\ldots,k\}$, we only need to consider the specific colours we are looking for at this point in our colour pattern.
However, for the sake of brevity, we consider any colour in $[kn]$.} $c\in [kn]$, vertex $u\in V_j^{i_q}$, 
and distance $g\in\{-k,\ldots,-1,1,\ldots,k\}$ such that $j+g\in [r]$, 
consider the random variable $X_c^g(u):=|N_{G_c}(u,V_{j+g}^{i_q})\setminus Y_{j+g}^q|$, 
which is the number of neighbours of $u$ in $G_c$ contained in the set $V_{j+g}^{i_q}$ that are not covered by any path $P_a$ for $i_q\le a\le i_{q+1}-1$. 
Note that $X_c^g(u)$ is hypergeometric with $n_{i_q}-\gamma n$ draws from a population of size $n_{i_q}$, 
which contains $d_{G_c}(u,V_{j+g}^{i_q})$ success states. 
Therefore, since $A_q$ does not hold given $T>q$,
\begin{align*}
    \Ex[X_c^g(u)] &=\left(\frac{n_{i_q}-\gamma n}{n_{i_q}}\right)d_{G_c}(u,V_{j+g}^{i_q})
    \ge (1-\delta)^q \left(1-\frac{1}{2k}+\alpha\right)(n_{i_{q}}-\gamma n).
\end{align*}
By applying \thref{lem:chernoff}, when $n$ is sufficiently large we have
$$\Pr\left(X_c^g(u)<(1-\delta)^{q+1}\left(1-\frac{1}{2k}+\alpha\right)n_{i_{q+1}}\right) \le \Pr\left(X_c^g(u) \le (1- \delta)\Ex[X_c^g(u)]\right) \le e^{-\sqrt{n}},$$
as $n_{i_q} - \gamma n =\Omega(n)$.
By taking a union bound, we see that $\Pr(T=q+1|T>q) \le e^{-n/3}$ for each $q \in [t-1]$.
Hence, with high probability, 
$T>t$ and thus the algorithm succeeds in producing the required collection of $k$-paths.
\end{proof}

\section{Conclusion}
\label{Sec:Future}

Recall that the main goal of this paper was to prove \thref{thm:main Gen}, which we restate below for convenience. 

\MainTheoremGen*

The question remains as to whether the minimum degree condition in \thref{thm:main Gen} is asymptotically best possible.
As shown in~\cite{GHMPS}, the transversal threshold\footnote{The \emph{transversal threshold} for a given graph $H$ is the optimal condition which, when satisfied by every graph in a collection $\bG$, guarantees the existence of a transversal copy of $H$ in $\bG$.} for $C_n^k$ is $(1-\frac{1}{k+1})n$, as opposed to \thref{thm:main Gen},
which requires $\delta(\bG)\ge (1-\frac{1}{2k}+\alpha)n$.

\begin{ques}
\thlabel{Q1}
Let $k\ge 2$. Is there a collection of graphs $\bG=\{G_1,...,G_{m}\}$ on a common vertex set $V$ with $|V|=n$ such that $\delta(\bG)=\left(1-\frac{1}{2k}\right)n$, 
and an edge-colouring $\chi:E(C_n^k)\rightarrow [m]$, such that $\bG$ does not contain a $\chi$-coloured Hamilton $k$-cycle?
\end{ques}

It is worth noting that our proof of \thref{thm:main Gen} relied on the minium degree condition being $\left(1-\frac{1}{2k}+\alpha\right)n$ at multiple points, 
from the generation of the $k$-path collection, to the linking of our absorbing gadgets into the required absorbing structure. 
As a result, if the optimal minimum degree condition is lower than $(1-\frac{1}{2k}+\alpha)n$, 
then very different proof techniques would need to be employed.

We can show that $\delta(\bG)\ge \left(1+ \frac{1}{k+1}\right)n$ is not sufficient to ensure the presence of every colour pattern of a Hamilton $k$-cycle within $\bG$.

\begin{prop}
\thlabel{tight?}
    Let $k\ge 1$, and let $n=p(k+1)$ for some integer $p\ge 3$. 
    Then there exists a collection $\bG=\{G_1,G_2\}$ of graphs on a common vertex set $V$ with $|V|=n$, and an edge-colouring $\chi:E(C_n^k)\rightarrow [2]$, 
    such that $\bG$ does not contain a $\chi$-coloured Hamilton $k$-cycle. 
\end{prop}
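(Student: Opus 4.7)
The plan is to construct $\bG = \{G_1, G_2\}$ with $\delta(\bG) \ge (1 - \frac{1}{k+1})n$ together with a colouring $\chi$ such that no embedding of $C_n^k$ into $\bG$ can satisfy all of the colour constraints. The construction will use two complete $(k+1)$-partite graphs isomorphic to $K_{p,p,\ldots,p}$ but relative to two different balanced partitions of $V$. Concretely, fix $\mathcal{P}_1 = \{A_0, \ldots, A_k\}$ with $|A_j| = p$ for each $j$, and let $G_1$ be the complete $(k+1)$-partite graph with respect to $\mathcal{P}_1$; then obtain $\mathcal{P}_2$ from $\mathcal{P}_1$ by exchanging a small set of vertices between two parts, and let $G_2$ be the complete $(k+1)$-partite graph with respect to $\mathcal{P}_2$. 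Both graphs have minimum degree $n - p = (1 - \frac{1}{k+1})n$, yet their edge sets differ on the pairs touched by the swap.

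The key structural fact I would use is that every copy of $C_n^k$ in a balanced complete $(k+1)$-partite graph $K_{p,\ldots,p}$ visits the $k+1$ parts in strict cyclic order: any $k+1$ consecutive vertices along the cycle must form a $K_{k+1}$ and are therefore transversals of the parts. Thus the family of Hamilton $k$-cycles in $K_{p,\ldots,p}$ is rigid, being parameterised only by a cyclic rotation, a direction, an assignment of abstract parts to actual $A_j$'s, and a permutation of vertices within each part.

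Using this rigidity, I would design $\chi$ in two steps. First, make the colour-1 subgraph of $C_n^k$ rich enough that in any $\chi$-valid embedding $\phi$ the sequence of $\mathcal{P}_1$-parts visited by $\phi(x_1), \ldots, \phi(x_n)$ is forced to be cyclic. Second, choose colour-2 edges of $C_n^k$ so that, after $\phi$'s $\mathcal{P}_1$-cyclic skeleton is fixed, every remaining choice of within-part permutations $\sigma_0, \ldots, \sigma_k$ forces some colour-2 edge to have both endpoints in the same $\mathcal{P}_2$-part, contradicting it being a $G_2$-edge. The main obstacle, and the reason the hypothesis $p \ge 3$ appears, is to ensure there is enough combinatorial room in each part simultaneously to obstruct every one of the $(p!)^{k+1}$ within-part permutations and to maintain the minimum-degree condition; balancing these two requirements---by choosing the $\mathcal{P}_2$-swap to involve several well-placed pairs and designing the colour-2 "trap" edges accordingly---is the core technical step.
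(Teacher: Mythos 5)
Your construction differs genuinely from the paper's, and the gap lies in the step you describe as ``the core technical step.'' The paper also takes $G_1$ to be the complete $(k+1)$-partite graph on a balanced partition $A_1,\ldots,A_{k+1}$, but its $G_2$ is \emph{not} complete multipartite with respect to a perturbed partition: instead, within certain parts it adds a clique, and between certain pairs of parts it deletes everything except a perfect matching. This lets the paper place the entire colour-$2$ region of $\chi$ on a single localised $k$-connector $J^k_{k,1}$ spanning only $2k+1$ consecutive vertices, which keeps almost every window of $k+1$ consecutive positions entirely colour-$1$, so the $\mathcal{P}_1$-cyclic structure is rigidly forced, and the contradiction then comes from the matching/clique structure of $G_2$ rather than from a counting trap over within-part permutations.

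Your version with $G_2$ a complete $(k+1)$-partite graph on a swapped partition $\mathcal{P}_2$ runs into a tension that your sketch does not acknowledge. Once the colour-$1$ skeleton has forced $\phi$ to be $\mathcal{P}_1$-cyclic, a colour-$2$ edge only carries information if its two endpoints lie in the two residue classes corresponding to the swapped parts; all other colour-$2$ edges are automatically $G_2$-edges. Writing $H$ for the auxiliary bipartite graph of colour-$2$ edges between those two residue classes, the within-part permutations are obstructed if and only if $H$ is connected and covers one of the two residue classes (for a $t$-element swap with $0<t<p$: an escape exists exactly when $H$ has a union of components whose left and right sizes both equal some $t'\in(0,p)$). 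Each position has at most two neighbours at the other residue (at cyclic distances $1$ and $k$), so $H$ is a subgraph of a single $2p$-cycle, and covering all of one side forces $H$ to contain at least $2p-2$ of those $2p$ edges. But those edges are spread evenly around $C_n^k$ (two per period of $k+1$), and tracing which windows they break shows they destroy all but $O(k)$ of the $n$ colour-$1$ windows, collapsing the very forcing argument in your first step. There is also a second source of slack you do not address: the bijection from residue classes to $\mathcal{P}_1$-parts is chosen by the embedder, so the embedder can steer the swapped pair $(A_0,A_1)$ onto whichever residue pair is least constrained; to trap all choices simultaneously would require $H$-like traps between \emph{every} ordered pair of residues, which is far more colour-$2$ material than the window-forcing can survive. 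Finally, you frame the obstacle as maintaining the minimum-degree condition, but this is not where the difficulty lies: every balanced complete $(k+1)$-partite graph on $n=(k+1)p$ vertices has minimum degree exactly $(1-\frac{1}{k+1})n$ regardless of the partition, so the degree condition is free. The real difficulty is the incompatibility above, and it is precisely what the paper's choice of a structurally different $G_2$ is designed to avoid.
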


\begin{proof}
Partition $V$ into sets $A_1,...,A_{k+1}$, each of size $p$. 
Let $G_1$ be the complete $(k+1)$-multipartite graph with partite sets $A_1,...,A_{k+1}$. 
Let $G_2$ be the graph constructed from $G_1$ as follows: For $i\in\left[\left\lfloor\frac{k+1}{2}\right\rfloor\right]$, replace the bipartite graph induced between $A_{2i}$ and $A_{2i-1}$ with a perfect matching. 
Finally, for $i\in\left[2\left\lfloor\frac{k+1}{2}\right\rfloor\right]$, add all edges within $A_i$.
Let $\chi$ be the edge-colouring of $C_n^k$ consisting of a single $k$-connector, from $k$ vertices to a single vertex, in colour $2$ on the vertices $v_1,...,v_{2k+1}$, and all other edges in colour $1$. 
See Figure~\ref{fig:two thirds} for an illustration when $k=2$. 

\begin{figure}[h]
\begin{center}
\begin{tikzpicture}

\begin{scope}[shift={(-6,0)}]

\node [circle, draw=black,fill=white, inner sep=0pt, minimum size=10mm] (A) at (0,2)[label= above: {$A$}]{};
\node [circle, draw=black,fill=white, inner sep=0pt, minimum size=10mm] (B) at (-1.25,0)[label=above : {$B$}]{};
\node [circle, draw=black,fill=white, inner sep=0pt, minimum size=10mm] (C) at (1.25,0)[label=above : {$C$}]{};

\draw [line width=3pt, color=red] (A)-- (B);
\draw [line width=3pt, color=red] (A)-- (C);
\draw [line width=3pt, color=red] (B)-- (C);

\end{scope}

\begin{scope}[shift={(-2,0)}]

\node [circle, draw=black,fill=white, inner sep=0pt, minimum size=10mm] (A) at (0,2)[label= above: {$A$}]{};
\node [circle, draw=black,fill=cyan, inner sep=0pt, minimum size=10mm] (B) at (-1.25,0)[label=above: {$B$}]{};
\node [circle, draw=black,fill=cyan, inner sep=0pt, minimum size=10mm] (C) at (1.25,0)[label=above: {$C$}]{};

\draw [line width=3pt, color=cyan] (A)--(B);
\draw [line width=3pt, color=cyan] (A)--(C);
\draw [line width=3pt, color=cyan, dotted] (B)--(C);

\end{scope}

\begin{scope}[shift={(4.75,0)}]
\node [std] (n-1) at (-4,1.5) [label= above: {$v_{n-1}$}]{};
\node [std] (n) at (-3,0) [label= below: {$v_n$}]{};
\node [std] (1) at (-2,1.5) [label= above: {$v_1$}]{};
\node [std] (2) at (-1,0) [label= below: {$v_2$}]{};
\node [std] (3) at (0,1.5) [label= above: {$v_3$}]{};
\node [std] (4) at (1,0) [label= below: {$v_4$}]{};
\node [std] (5) at (2,1.5) [label= above: {$v_5$}]{};
\node [std] (6) at (3,0) [label= below: {$v_6$}]{};

\draw [line width=1.5pt, color=cyan] (1)--(2);
\draw [line width=1.5pt, color=cyan] (1)--(3);
\draw [line width=1.5pt, color=cyan] (2)--(3);
\draw [line width=1.5pt, color=cyan] (2)--(4);
\draw [line width=1.5pt, color=cyan] (3)--(4);
\draw [line width=1.5pt, color=cyan] (3)--(5);

\draw [line width=1.5pt, color=red] (n-1)--(n);
\draw [line width=1.5pt, color=red] (n-1)--(1);
\draw [line width=1.5pt, color=red] (n)--(1);
\draw [line width=1.5pt, color=red] (n)--(2);
\draw [line width=1.5pt, color=red] (4)--(5);
\draw [line width=1.5pt, color=red] (4)--(6);
\draw [line width=1.5pt, color=red] (5)--(6);

\draw [line width=1.5pt, color=red] (n-1)--(-4.5,1.5);
\draw [line width=1.5pt, color=red] (n)--(-3.5,0);
\draw [line width=1.5pt, color=red] (n-1)--(-4.25,1.2);

\draw [line width=1.5pt, color=red] (5)--(2.5,1.5);
\draw [line width=1.5pt, color=red] (6)--(3.5,0);
\draw [line width=1.5pt, color=red] (6)--(3.25,0.3);

\end{scope}
\end{tikzpicture}
\caption{Graphs $G_1$ (red) and $G_2$ (blue), each with minimum degree $\frac{2}{3}n=2p$, 
and a colour pattern $\chi$ such that $\bG=\{G_1,G_2\}$ does not contain a $\chi$-coloured Hamilton $2$-cycle. 
In $\bG$, a solid colour partite set represents a copy of $K_p$, a solid edge represents a copy of $K_{p,p}$, and a dotted edge represents a perfect matching.}\label{fig:two thirds}
\end{center}
\end{figure}
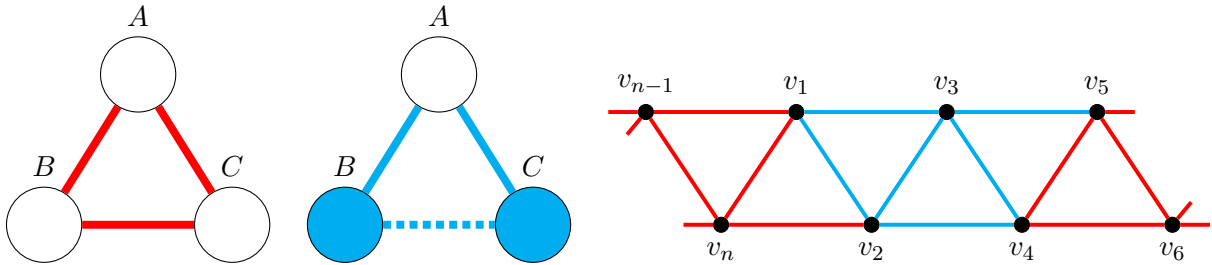

We claim that $\bG$ has no $\chi$-coloured Hamilton $k$-cycle.
For ease of notation, define $A'_{2i}:=A_{2i-1}$ and $A'_{2i-1}:=A_{2i}$ for each $i\in\left[\left\lfloor\frac{k+1}{2}\right\rfloor\right]$.
Note that, since $n\equiv 0 \pmod{k+1}$ and any copy of $K_{k+1}$ in $G_1$ uses exactly one vertex from each set, any $\chi$-coloured copy of $C_n^k$ in $\bG$ must have $v_1$ and $v_{k+2}$ in the same set. 
Moreover, $v_1,...,v_{k+1}$ must all be in different sets, and similarly $v_1$, $v_n$, and $v_{k+3},...,v_{2k+1}$ must all be in different sets. 
By symmetry, there are two cases to consider: either $v_1$ is in a set $A_i$ that induces a clique in $G_2$, or $v_1$ is in a set $A_i$ that forms an independent set in $G_2$. 

Suppose first that $v_1$, and thus $v_{k+2}$, is $A_1$, which induces a clique in $G_2$ regardless of the parity of $k$. 
Then, since $v_1,...,v_{k+1}$ are all in different sets, there is some $2\le i\le k+1$ such that $v_i\in A'_1$. 
But then $v_i$ must be adjacent to two distinct vertices (namely $v_1$ and $v_{k+2}$) in $A_1$ via edges in $G_2$, which is impossible. 
Therefore, $v_1$ must be in a set $A_i$ that forms an independent set in $G_2$.
In particular, this implies that $\bG$ does not have a $\chi$-coloured Hamilton $k$-cycle when $k\equiv 1\pmod{2}$. 

So assume $k\equiv 0\pmod{2}$, and that $v_1$, and thus $v_{k+2}$, is in $A_{k+1}$, which forms an independent set in $G_2$. 
Then, since $v_1,...,v_{k+1}$ are all in different sets, $v_2,...,v_{k+1}$ must each be in a different set $A_1,...,A_{k}$. 
For $2\le i\le k+1$, define $j_i\in[k]$ to be the index such that $v_i\in A_{j_i}$.
Note that, since all edges in $\chi$ amongst the vertices $v_2,...,v_{k+1}$ are in $G_2$, each vertex $v_i$ with $2\le i\le k+1$ is already adjacent to a vertex in $A'_{j_i}$.
Now, since $v_{k+3}$ is adjacent to each of $v_3,...,v_{k+1}$ via edges in $G_2$, and to $v_{k+2}$ via an edge in $G_1$, it follows that $v_{k+3}$ must be in $A'_{j_2}$. 
Indeed,  $v_{k+3}$ clearly cannot be in $A_{k+1}$, due to its adjacency to $v_{k+2}$ via an edge in $G_1$. 
Moreover, $v_{k+3}$ cannot be in $A'_{j_i}$ for any $3\le i\le k+1$, otherwise $v_i$ would be adjacent to two distinct vertices in $A'_{j_i}$ via edges in $G_2$, which is impossible.
Similarly, since $v_{k+4}$ is adjacent to each of $v_4,...,v_{k+1}$ via edges in $G_2$, and to $v_{k+2}$ and $v_{k+3}$ via edges in $G_1$, it follows that $v_{k+4}$ must be in $A'_{j_3}$.
Continuing in this manner, we conclude that $v_{2k+1}\in A'_{j_k}$,  and thus, since $v_1$, $v_n$, and $v_{k+3},...,v_{2k+1}$ must all be in different sets, it follows that $v_n\in A'_{j_{k+1}}$. 
However, since $A_{j_{k+1}}$ and $A'_{j_{k+1}}$ are distinct sets, there exists some $2\le i\le k$ such that $v_i$ is also in $A'_{j_{k+1}}$, 
which is impossible since $v_n$ is adjacent to each of $v_2,...,v_k$ via edges in $G_1$. Therefore, $\bG$ has no $\chi$-coloured Hamilton $k$-cycle, as required.
\end{proof}

A positive answer to \thref{Q1} would be particularly interesting as the result of Montgomery, M\"{u}yesser and Pehova~\cite{MMP} shows that the pattern threshold\footnote{The \emph{pattern threshold} for a given graph $H$ is the optimal condition which, when satisfied by every graph in a collection $\bG$, guarantees the existence of every colour pattern of $H$ in $\bG$.} for the $K_k$ factor agrees with the classical extremal threshold -- see Hajnal and Szemeredi's~\cite{HS} seminal theorem. Indeed, a Hamilton $(k-1)$-cycle contains a $K_k$-factor (when $k$ divides $n$).

Even if the answer to \thref{Q1} is negative, it is still possible the optimal correct minimum degree threshold lies between the transversal threshold and our bound in \thref{thm:main Gen}. 
Indeed, in the brief history of transversal results, we have already seen some surprising behaviour. For example, Aharoni, DeVos, Hermosillo de la Maza, Montejano, and \v{S}\'{a}mal.~\cite{ADHMS} proved that the transversal threshold for the triangle is $(\frac{26-2\sqrt{7}}{81})n^2 \approx 0.2557n^2$, in stark contrast to $n^2/4$, the edge density given by Mantel's theorem~\cite{mantel} for the appearance of a triangle. Additionally, it has been shown~\cite{MMP} that the transversal threshold and pattern threshold of large balanced complete bipartite graphs differ.
Furthermore, Anastos and Chakraborti~\cite{AC} showed that if $(G_1, \ldots, G_n)$ is an $n$-tuple of independent graphs, 
where $G_i \sim G(n, p)$ with $p \gg \log n/n^2$, then with high probability (w.h.p.) the tuple $(G_1, \ldots, G_n)$ contains a transversal Hamilton cycle but w.h.p.
each $G_i$ is not Hamiltonian. 

It is unclear the extent to which restricting to a certain number of colours or patterns can influence conditions forcing transversal structures (see, for example, {\cite[Question~3.1]{CMM}}). For certain restrictive colour patterns, our proof can be adapted to go through with the transversal threshold $\left(1+ \frac{1}{k+1}\right)$ in place of $\left(1+ \frac{1}{2k}\right)$. However, it seems that very new ideas will be needed to obtain this improvement for \emph{any} colour pattern.

\vspace{10mm}

\label{Refs}

\end{document}